\newtheorem{myDef}{Definition}
\newtheorem{lemma}{Lemma}
\newtheorem{theorem}{Theorem}
\newtheorem{remark}{Remark}
\newtheorem{proposition}{Proposition}
\begin{document}

\begin{frontmatter}
\title{\bf Sustainable cooperation on the hybrid pollution-control game with heterogeneous players}
\tnotetext[t1]{The work of the second author  was supported by the Russian Science Foundation under grant No. 24-21-00302, https://rscf.ru/en/project/24-21-00302/}
%\tnotetext[t2]{The second title footnote which is a longer text matter to fill through the whole text width and overflow into another line in the footnotes area of the first page.}

\author[1]{Yilun Wu\corref{cor1}} %\fnref{fn1}
\ead{wuyilun310@gmail.com}
\author[1]{Anna Tur}
\ead{a.tur@spbu.ru}
\author[1]{Peichen Ye}
\ead{Yepeichen@outlook.com}
\cortext[cor1]{Corresponding author}
%\fntext[fn1]{This is the first author footnote.}
%\fntext[fn2]{Another author footnote, this is a very long footnote and it should be a really long footnote. But this
%footnote is not yet sufficiently long enough to make two lines of footnote text.}
%\fntext[fn3]{Yet another author footnote.}
\affiliation[1]{organization={Saint Petersburg State University},
city={Saint Petersburg},
country={Russia}}
%\affiliation[2]{organization={Elsevier B.V.},
%addressline={Radarweg 29},
%postcode={1043 NX},
%city={Amsterdam},
%country={sss}}
%\tnoteref{<label(s)>}
%\corref{<label(s)>}
%\fnref{<label(s)>}
%\tnotetext[<label>]{<title note text>}
%\cortext[<label>]{<corresponding author note text>}
%\fntext[<label>]{<author footnote text>}

\begin{abstract}
This paper considers a hybrid pollution-control differential game with two farsighted players and one myopic player. Both the seasonal regime shifts in the state dynamics and the players' heterogeneous preferences are introduced into the model. The strategies under cooperative, noncooperative and partially cooperative scenarios are obtained by utilizing  the Pontryagin's Maximum Principle. Under all feasible coalition structures, the convergence of the state variable is proved. A new sustainably--cooperative optimality principle is proposed according to the coalition structures, which belongs to the imputation set.  The prerequisite for the existence of time-consistency in the sustainably-cooperative optimality principle is explicitly obtained. The seasonal imputation distribution procedure (IDP) is designed to maintain the time-consistentcy (dynamic stability) of cooperation over time.
\end{abstract}

\begin{keyword}
Hybrid differential games \sep time-consistency \sep limit cycle \sep heterogeneous players \sep IDP
\end{keyword}

\end{frontmatter}
\section{Introduction}
Environmental pollution and emission reduction problems, as the difficulties faced by human beings and regional development, have become one of the research focuses of the academic community \citep{Breton2005A, J2010,Dez2018}. 

When considering models of ecological management, it is necessary to take into account possible seasonal changes in the environment, which leads to the need to consider models with regime shifts. Phenomena of switching behavior in decision--making tasks are pervasive and ubiquitous across a wide range of real--world applications and domains. In the control theory domain, hybrid optimal control methods are mainly used to overcome decision--making problems with switching phenomena \citep{riedinger2003optimal, shaikh2007hybrid, bonneuil2016optimal, gromov2022periodic}. In \citep{Par2024}, a two-stage optimal control problem with state-driven switching is solved by employing the Pontryagin’s Maximum Principle. In \citep{Dez2024}, models in environmental economics are investigated with stochastic time-driven tipping moment, solution are derived through the Hamilton-Jacobi-Bellman equation.
 
Alternatively, considering complex switching mechanisms and players' interaction, in \citep{Gromov2017}, it has been demonstrated that hybrid differential games can be rigorously formulated drawn from the paradigm of hybrid systems, and that the problem of determining the strategies in the game-theoretic problems can be formally posed and solved using the methodologies of hybrid optimal control theory. In \citep{gromov2014differential}, hybrid differential games with random duration \citep{Jes2012Heterogeneous, De2012,de2013,gromov2014differential,Gromova2016,Gromova2021} and regime shifts \citep{Gromov2017} were considered in detail  within a hybrid optimal control framework.  Optimal pollution control in the dynamic system with periodic switching is first considered in \citep{gromov2022periodic}. Subsequently, the model is generalized to game-theoretic frameworks with regime shifts and random duration in \citep{Wu2023}.

Although global cooperation and coordination can benefit everyone involved, each country would like to unilaterally free ride on everyone else’s efforts, %[https://cordis.europa.eu/project/id/706620]
which  constitutes a significant obstacle to effective international collaboration. However, the International environmental agreements (IEAs) makes an effort to address transboundary environmental issues by maintaining the stability of cooperation \citep{Alt2013}. Therefore, the examination of the exogenous factors arising from different coalition structures aims at discussing the necessity of cooperation between nations or players in pollution control problems. On the one hand, it provides an alternative paradigm for modeling IEAs according to \citep{fin2008}. On the other hand, it explores and resolves possible conflicts of interest among players \citep{su2024}. In \citep{su2023, Fanokoa2011}, the condition for the stability of the grand coalition with two developed countries and one developing is investigated. However, the time--inconsistency issue was not considered. For mathematical tractability, unlike several papers in the literature (see, e.g., \citep{Fanokoa2011} or \citep{su2023}), where the cost function is quadratic in the pollution stock, this paper follows the classical setting where the cost function is linear with respect to the pollution stock. This setting not only simplifies the analysis on open-loop strategies and state dynamics, but also facilitates the derivation of the seasonal imputation distribution procedure (IDP).

In this paper, we consider a 3-player differential game, in which players face the exogenous effect -- environmental (e.g., climate) changes, meanwhile, endogenous effect -- heterogeneous preferences. This work endeavors to address and provide insights into the following research queries: 

1. With climate changes, what decision-making processes will be employed by heterogeneous players  under all feasible coalition structures? 

2. The significance of the grand coalition formation, i.e., can players achieve more in grand coalition? 

3. Whether dynamic stability (time-consistency) of cooperation can be achieved?

The main contributions of this work are summarized in the following highlights:

1. For the 3-player hybrid differential game with heterogeneous players, open-loop strategies under all feasible coalition structures are uniquely derived.

2. In each feasible coalition structure, we prove that the state variable with arbitrary initial values is convergent to a unique hybrid limit cycle.

3. Based on distinct mechanisms of coalition formation, a novel sustainably-cooperative optimality principle is proposed for the 3-player hybrid differential game, in which players have different preferences.

4. The condition that the time-consistency of the sustainably-cooperative optimality principle exists is obtained.

The findings of this study have significant applications on decision making processes in robotics (e.g., multi-robot coalition formation problem), economics (e.g., IEAs) and engineering (e.g., distributed communication system). The preferences exhibited by players in the game may be influenced by individual self-interests, such as minimizing/maximizing the instantaneous payoff (e.g., energy consumption, net profit, mission time or path tracking error), as well as social considerations, including the complementary nature of other players' capabilities and the avoidance of congestion/failure when accessing shared resources.

The rest of the paper is structured as follows. Section 2 formulated the model of the 3-player pollution control differential game with time-driven switches and heterogeneous (farsighted and myopic) preferences. Section 3 presents decision-making processes under all possible coalition structures, the cooperative, noncooperative and partially cooperative solutions are obtained explicitly. Section 4 investigates dynamics of the state under different coalition structures, the convergence of the state variable is proved. In Section 5, a novel sustainably-cooperative optimality principle is proposed and the prerequisite for the existence of its time-consistency is derived. The imputation distribution procedure (IDP) is designed  for ensuring time-consistency. Section 6 briefly concludes the paper.

\section{The model}
We consider a hybrid differential game  involving  three neighboring players, where each player (e.g., country or factory) engages in production work and simultaneously controls the policy of his/her pollution emission into the common environment of  a specific region. Given the fluidity and accumulation of regional pollution problems, the common region of the players is affected not only by the negative externalities of each player's own emissions, but also by the emissions of neighboring players. The pollution stock accumulates over time as players continue to emit. The pollution stock at time $t$ is $x(t)$. We assume that the pollution stock is non-negative for all $t>0$. Thus, the dynamics of pollution stock with regime shifts \citep{Wu2023} can be described as the following differential equation:

\begin{equation}\label{eq:$x$}
\dot{z}(t)=\sum_{i=1}^{3}\xi_i v_i(t)-\delta(t) z(t), \ z(0)=z_0, \\
\end{equation}
where $z_0>0$ is the initial pollution stock, $ v_i\in [0,b_i]$ denotes the emission rate of player $i$, $b_i$ is player $i's$ maximum emission rate, $\xi_i >0$ is the marginal influence on pollution accumulation, $\delta(t)>0$ represents the environment's ability to decompose pollution or self-purify rate, which is defined by a periodic piecewise constant function as follows:
\begin{equation}\label{eq:$beta$}
\delta(t):=\begin{cases}\delta_1 > 0,& t\in[kT,(k+ \tau)T),\\\delta_2> 0,& t\in[(k+\tau)T),(k+1)T)].\end{cases}
\end{equation}

 Accounting for the seasonal fluctuation, in this setting, the periodic switching characteristic of the environment's self-purify rate can be modeled. Each period $T$ is divided into two parts. For $k\in \mathbb{N}_0$ and $\tau \in (0,1)$, in the first part, i.e., $t\in [kT,(k+ \tau)T)$, the environment's self-purify rate is $\delta_1$; in the second part, i.e., $t\in [(k+\tau)T),(k+1)T)]$, the environment's self-purify rate is $\delta_2$.

In this work, the hybrid differential game of pollution control is considered with a myopic player and two farsighted players \citep{Fanokoa2011}. In particular, we suppose that the farsighted players are vulnerable to pollution and the myopic player is non-vulnerable to pollution. This setting can be employed for regional pollution control problems between developed and developing countries \citep{su2023}. %For mathematical tractability, unlike several papers in the literature (see, e.g., \citep{Fanokoa2011} or \citep{su2023}), where the cost function is quadratic in the pollution stock, this paper follows the classical setting where the cost function is linear with respect to the pollution stock. This setting not only simplifies the analysis on open-loop strategies and state dynamics, but also facilitates the derivation of the seasonal imputation distribution procedure (IDP).

Therefore, considering the importance that farsighted players place on the pollution stock, the payoff functional of the farsighted player $i=1,2$ to be maximized is given by
\begin{equation}\label{eq:$k1$}
K_i(z_0,v)=\int_{0}^{\infty} e^{-\rho t}[a_iv_i(b_i-v_i/2) - q_iz]\, dt, \\
\end{equation}
where $a_i>0$ converts the production stream into a profit stream, $q_i>0$ represents for the tax that player $i$ has to bear. Assume that the myopic player is indifferent to the stock of environmental pollution, the payoff functional of the myopic player $i=3$ to be maximized is given by
\begin{equation}\label{eq:$k2$}
K_3(z_0,v)=\int_{0}^{\infty} e^{-\rho t}[a_3v_3(b_3-v_3/2)]\, dt. \\
\end{equation}

\section{Players' strategies under different coalition structures}

In this section, we examine all possible coalition structures in this 3-player game. Emission policies of all players can be derived under cooperative, noncooperative and partial-cooperative scenarios. The corresponding possible coalition structures are given by: 1. Cooperative scenario: $\pi_1=\{1,2,3\}$; 2. Noncooperative scenario: $\pi_2=\{  \{1\},\{2\},\{3\}\}$; 3. Partial- cooperative scenarios: $\pi_3=\{  \{1,2\},\{3\}\} \ ,\pi_{41}=\{\{1,3\},\{2\} \}\ , \pi_{42}=\{ \{2,3\}, \{1\} \}$. The sustainability of cooperation and the conditions for its achievement are the main issues of concern in this work. 

\subsection{Cooperative scenario: $\pi_1=\{1,2,3\}$}

Under this scenario, players collaborate to maximize their joint payoff

\begin{equation}\label{eq:$kco$}
K^{\pi_1}=K_1+K_2+K_3=\sum_{i=1}^{3}\int_{0}^{\infty} e^{-\rho t} [\sum_{i=1}^{3}(a_iv_i(b_i-v_i/2)) - qz]\, dt, \\
\end{equation}
where $s=q_1+q_2$. 

Employing the Pontryagin's Maximum Principle, the current value Hamiltonian can be obtained as follows:
\begin{equation}\label{eq:$Hco$}
 H(z,v,\lambda)=\sum_{i=1}^{3}(a_iv_i(b_i-v_i/2)) - qz+ \lambda(\sum_{i=1}^{3}\xi_i v_i-\delta(t) z).
\end{equation}

Hence, the canonical system is obtained:
\begin{equation}\begin{aligned}\label{eq:$csN$}
&\dot{z}=\sum_{i=1}^{3}\xi_i v_i-\delta(t) z, \\
&\dot{\lambda}=\rho\lambda-\frac{\partial H}{\partial z}=s+(\rho+\delta(t))\lambda.
\end{aligned}\end{equation}

Through the first-order optimality condition of \eqref{eq:$Hco$}, the optimal control of player $i$ can be obtained as follows:
\begin{equation}\label{eq:$uco$}
v^{co}_{i}=\begin{cases}b_i,&\lambda(t)>0,\\ b_i+\frac{\xi_i}{a_i}\lambda(t),&\lambda(t) \in[-\frac{a_ib_i}{\xi_i},0],\\ 0,& \lambda(t)<-\frac{a_ib_i}{\xi_i}.\end{cases}
\end{equation}

\begin{myDef}\label{def1}
   Emission strategy $v_{i}(t)$ is {\bf environmentally sustainable}, see \citep{G1995},  if it does not take on boundary values except at isolated moments in time, i.e., $\lambda(t) \in[-\frac{a_ib_i}{\xi_i},0]$, for all $t>0$.
\end{myDef}

In this setting, for $\lambda(0)=\lambda(T)$, a unique optimal solution can be determined, which forms a hybrid limit cycle with period $T$, see \citep{gromov2022periodic}. 

Moreover, $\lambda(t)$ diverges if $\lambda(0) \neq \lambda(T)$, see \citep{Wu2023}. Then, after some $t^{\prime}$, player halts the production ($v^{co}_{i}(t)=0$) or player produces at maximum production rate ($v^{co}_{i}(t)=b_i$) for $t\geq t^{\prime}$. These situations are inconsistent with the concept of sustainable development. According to this background, we only examine the situation that when $v^{co}_{i}$ is environmentally sustainable.  

Define $L(t)$ as follows:

\begin{equation}\label{eq:$L_t$}
L(t)=\begin{cases}m_1e^{s_1(t-kT)}-\frac{1}{s_1},&t\in[kT,(k+\tau)T],\\ m_2e^{s_2(t-kT)}-\frac{1}{s_2},&t \in[(k+\tau)T,(k+1)T],\end{cases}
\end{equation}
where $s_1=\delta_1+\rho,\ s_2=\delta_2+\rho,\ m_1=\frac{(s_2-s_1)(1-e^{s_2 T(\tau-1)})}{s_1s_2(e^{s_1 \tau T}-e^{s_2 T(\tau-1)})}, \ m_2=e^{-s_2T}(m_1-\frac{1}{s_1}+\frac{1}{s_2})$.

Thus, through $\lambda(0)=\lambda(T)$, we obtain the limit cycle solution of the adjoint variable  $\lambda(t)=q\cdot L(t)$. 

Therefore, the equilibrium initial value  $\lambda(0)=\lambda_{hlc}$ can be uniquely derived as follows: 
\begin{equation}\label{eq:$lamhlc$}
\lambda_{hlc}=q(m_1-\frac{1}{s_1}).
\end{equation}

% \begin{remark}\label{rem1}
%   when $\lambda(0)\neq \lambda_{hlc}$, the adjoint variable $\lambda(t)$ diverges, see [Wu 2023].
% \end{remark}

% \begin{remark}\label{rem2}
%   Employing this environmentally sustainable strategy, the state variable $x(t)$ with arbitrary initial value $x_0$ approaches to a unique hybrid limit cycle.
% \end{remark}

\subsection{Noncooperative scenario: $\pi_2=\{  \{1\},\{2\},\{3\}\}$}

In the absence of a cooperation agreement or failure to reach a cooperation agreement among any number of players. Players make decisions independently to maximize their individual payoff $K_i$, choosing the Nash equilibrium emission path under the constraint mechanism of pollution dynamics.

Player $3$, with a myopic perspective, tends to focus on short-term gains and neglect the long-term effects of their harm to the environment. The emission strategy of the player $3$ can be derived directly
\begin{equation}\label{eq:$u3NE$}
v_3^{NE}=b_3.
\end{equation}

Note that when lacking of consideration for environmental pollution, player $3$ sets the emission/production rate to its maximum value. This occurs because the player's objective is to maximize their own payoff, and does not consider the potential negative externalities of their actions on the environment.

For the farsighted players $j=1,2$, we determine their Nash equilibrium by utilizing the Pontryagin's Maximum Principle.

From their own integral payoff \eqref{eq:$k1$}, the current value Hamiltonian of the player $j$ is given by
\begin{equation}\label{eq:$HNE$}
 H_j(z,v,\lambda)=a_iv_i(b_i-v_i/2) - q_jz+ \lambda_j(\sum_{i=1}^{3}\xi_i v_i-\delta(t) z).
\end{equation}

Hence, the canonical system is obtained:
\begin{equation}\begin{aligned}\label{eq:$csNE$}
&\dot{z}=\sum_{i=1}^{3}\xi_i v_i-\delta(t) z, \\
&\dot{\lambda_j}=\rho\lambda_j-\frac{\partial H_j}{\partial z}=q_j+(\rho+\delta(t))\lambda_j.
\end{aligned}\end{equation}

To find the environmentally sustainable solution, let $\lambda_j(0)=\lambda_j(T)$, the unique $\lambda_j(t)=q_j \cdot L(t)$, in the form of hybrid limit cycle, and equilibrium initial value  $\lambda_j(0)=\lambda_{j_{hlc}}=q_j(m_1-\frac{1}{s_1})$ can also be derived.

Therefore, the Nash equilibrium of player $j$ is obtained as follows:

\begin{equation}\label{eq:$uNE$}
v^{NE}_{j}=\begin{cases}b_j,&\lambda_j(t)>0,\\ b_j+\frac{\xi_j}{a_j}\lambda_j(t),&\lambda_j(t) \in[-\frac{a_jb_j}{\xi_j},0],\\ 0,& \lambda_j(t)<-\frac{a_jb_j}{\xi_j}.\end{cases}
\end{equation}

%One notes that comparing the noncooperive strategies with cooperative

\subsection{Partial-cooperative scenario}
\subsubsection{Two farsighted players cooperate: $\pi_3=\{\{1,2\},\{3\}\}$}

When myopic player acts as a singleton and two farsighted players cooperate. For player $3$, we have
\begin{equation}\label{eq:$u3pi_3$}
v_3^{\pi_3}=b_3.
\end{equation}

In coalition $\{1,2\}$, players maximize their joint payoff
\begin{equation}\label{eq:$kpa3$}
K_{\{1,2\}}=K_1+K_2=\sum_{i=1}^{2}\int_{0}^{\infty} e^{-\rho t} [\sum_{i=1}^{2}(a_iv_i(b_i-v_i/2)) - qz]\, dt, \\
\end{equation}
where $q=q_1+q_2$. 

Employing the Pontryagin's Maximum Principle, the current value Hamiltonian of the player $j=1,2$ is given by
\begin{equation}\label{eq:$Hpi3$}
 H_{\{1,2\}}(z,u,\lambda)=\sum_{i=1}^{2}(a_iv_i(b_i-v_i/2)) - qz+ \lambda_{\{1,2\}}(\sum_{i=1}^{3}\xi_i v_i-\delta(t) z).
\end{equation}

It is straightforward that $\lambda_{\{1,2\}}=\lambda$.

% \begin{equation}\label{eq:$lampi3$}
% \dot{\lambda}_{\{1,2\}}=\rho\lambda-\frac{\partial  H_{\{1,2\}}}{\partial x}=s+(\rho+\delta(t))\lambda_{\{1,2\}}.
% \end{equation}

Hence, the optimal control of player $j$ is obtained as follows:
\begin{equation}\label{eq:$upi3$}
v^{\pi_3}_{j}=\begin{cases}b_j,&\lambda(t)>0,\\ b_j+\frac{\xi_j}{a_j}\lambda(t),&\lambda(t) \in[-\frac{a_jb_j}{\xi_j},0],\\ 0,& \lambda(t)<-\frac{a_jb_j}{\xi_j}.\end{cases}
\end{equation}

\subsubsection{Farsighted player $1$ cooperates with myopic player: $\pi_{41}=\{\{1,3\},\{2\} \}$}

In this scenario, for coalition $\{1,3\}$, players maximize their joint payoff
\begin{equation}\label{eq:$kpa41$}
K_{\{1,3\}}=K_1+K_3=\int_{0}^{\infty} e^{-\rho t} [a_1v_1(b_1-v_1/2)+a_3v_3(b_3-v_3/2) - q_1z]\, dt. \\
\end{equation}

Then, the current value Hamiltonian of the player $j=1,3$ is given by
\begin{equation}\label{eq:$Hpi41$}
 H_{\{1,3\}}(z,u,\lambda)=a_1v_1(b_1-v_1/2)+a_3v_3(b_3-v_3/2) - q_1z+ \lambda_{\{1,3\}}(\sum_{i=1}^{3}\xi_i v_i-\delta(t) z).
\end{equation}

The dynamics of the adjoint variable is derived by 
\begin{equation}\label{eq:$lampi41$}
\dot{\lambda}_{\{1,3\}}=\rho\lambda_{\{1,3\}}-\frac{\partial  H_{\{1,3\}}}{\partial z}=q_1+(\rho+\delta(t))\lambda_{\{1,3\}}.
\end{equation}
Hence, the optimal control of player $j=1,3$ is obtained as follows:
\begin{equation}\label{eq:$upi41$}
v^{\pi_{41}}_{j}=\begin{cases}b_j,&\lambda_1(t)>0,\\ b_j+\frac{\xi_j}{a_j}\lambda_1(t),&\lambda_1(t) \in[-\frac{a_jb_j}{\xi_j},0],\\ 0,& \lambda_1(t)<-\frac{a_jb_j}{\xi_j}.\end{cases}
\end{equation}

The player $2$ acts as a singleton in this coalition structure. Therefore, his control strategy, equivalent to his Nash equilibrium in the noncooperative scenario $\pi_2$, is given by
\begin{equation}\label{eq:$upi412$}
v^{\pi_{41}}_{2}=\begin{cases}b_2,&\lambda_2(t)>0,\\ b_2+\frac{\xi_2}{a_2}\lambda_2(t),&\lambda_2(t) \in[-\frac{a_2b_2}{\xi_2},0],\\ 0,& \lambda_2(t)<-\frac{a_2b_2}{\xi_2}.\end{cases}
\end{equation}

\subsubsection{Farsighted player $2$ cooperate with myopic player: 
 $\pi_{42}=\{ \{2,3\}, \{1\} \}$}

As the coalition $\pi_{42}$ is similar to $\pi_{41}$, we skip the calculation process and present the result directly.

Optimal control of player $j=2,3$ is given by

\begin{equation}\label{eq:$upi42$}
v^{\pi_{42}}_{j}=\begin{cases}b_j,&\lambda_2(t)>0,\\ b_j+\frac{\xi_j}{a_j}\lambda_2(t),&\lambda_2(t) \in[-\frac{a_jb_j}{\xi_j},0],\\ 0,& \lambda_2(t)<-\frac{a_jb_j}{\xi_j}.\end{cases}
\end{equation}

Strategy of player $1$ is equivalent to his Nash equilibrium in the noncooperative scenario $\pi_2$.
\begin{equation}\label{eq:$upi422$}
v^{\pi_{42}}_{1}=\begin{cases}b_1,&\lambda_1(t)>0,\\ b_1+\frac{\xi_1}{a_1}\lambda_1(t),&\lambda_1(t) \in[-\frac{a_1b_1}{\xi_1},0],\\ 0,& \lambda_1(t)<-\frac{a_1b_1}{\xi_1}.\end{cases}
\end{equation}

To sum up, according to the characteristic of players’ strategy under different coalition structures, we propose the following remark:
\begin{remark}\label{rem01}
  When the myopic player cooperates with any number of the farsighted players, following the farsighted player(s) in the same coalition, the strategy of the myopic player is transformed into the form of a hybrid limit cycle, instead  of taking the maximum production rate, as a result of the interaction between players with different perspectives.
\end{remark}

\begin{remark}\label{rem02}
  If the initial stock is fixed to $z_0$, the cooperation coalition, denoted by $\pi_1$, is observed to maintain the stock of pollution at its lowest level compared to other feasible coalition structures
\end{remark}

From the insight shown above, in order to keep the stock of environmental pollution at a minimum level and guarantee players' best possible integral payoff, players have to cooperate in the grand coalition $\pi_1$.  This may occur for the reason that players' emission rate, as well as the stock of pollution govern by the players' emission strategy, under $\pi_1$ are lower than that in other coalition structures.

\section{Dynamics of the state under different coalition structures}

In the previous section, we specified players' strategies under coalition structure $\pi=\pi_1,\ \pi_2,\ \pi_3,\ \pi_{41}, \ \pi_{42}$. %Assume that players' strategies under various coalition structure satisfied the condition of the environmentally sustainable control. 
Subsequently, we turn to examining the dynamics of the state under these scenarios. The proof of the forthcoming theorem relies on \citep{Gromov2023}.

 \begin{theorem}\label{Theorem: Theorem1}
 
Under coalition structure $\pi =\pi_1,\ \pi_2,\ \pi_3,\ \pi_{41}, \ \pi_{42}$, for an arbitrary initial state $z^{\pi}(0)=z^{\pi}_0\ge 0$, the state variable $z^{\pi}(t)$ driven by $\{v_{i}^{\pi}\}_{1}^{3}$ exponentially converges to a unique hybrid limit cycle $\bar{z}^{\pi}(t)$ as $t\to \infty$.
\end{theorem}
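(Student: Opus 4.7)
The plan is to reduce the state equation, under each fixed coalition structure $\pi$, to an inhomogeneous scalar linear ODE with $T$-periodic coefficients and then invoke a contraction-map argument on the Poincar\'e map $z_0\mapsto z^{\pi}(T;z_0)$. First I would observe that once the adjoint variables are restricted to their (unique) environmentally sustainable hybrid limit cycles derived in Section 3 -- namely $\lambda(t)=q\,L(t)$ in $\pi_1,\pi_3$, and $\lambda_j(t)=q_j L(t)$ in $\pi_2,\pi_{41},\pi_{42}$ -- the equilibrium strategies $v_i^{\pi}(t)$ obtained from \eqref{eq:$uco$}--\eqref{eq:$upi422$} become explicit $T$-periodic, piecewise-continuous functions of $t$ (with discontinuities only at the switching instants $kT$ and $(k+\tau)T$). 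Consequently the closed-loop dynamics reads
\begin{equation*}
\dot z^{\pi}(t)=g^{\pi}(t)-\delta(t)\,z^{\pi}(t),\qquad g^{\pi}(t):=\sum_{i=1}^{3}\xi_i v_i^{\pi}(t),
\end{equation*}
where $g^{\pi}$ and $\delta$ are both $T$-periodic.

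Next I would integrate this scalar linear equation by the variation-of-constants formula on each of the two sub-intervals $[kT,(k+\tau)T)$ and $[(k+\tau)T,(k+1)T)$, concatenate the two flows, and obtain the explicit Poincar\'e map
\begin{equation*}
P^{\pi}(z_0)=\mu\,z_0+c^{\pi},\qquad \mu:=e^{-(\delta_1\tau+\delta_2(1-\tau))T}\in(0,1),
\end{equation*}
with $c^{\pi}\ge 0$ computable from $g^{\pi}$ and $\delta$. Because $\mu<1$, $P^{\pi}$ is an affine contraction on $\mathbb{R}_{\ge 0}$; by Banach's fixed point theorem it admits a unique fixed point $z^{\pi}_{*}=c^{\pi}/(1-\mu)\ge 0$, and the periodic solution starting from $z^{\pi}_{*}$ is precisely the hybrid limit cycle $\bar z^{\pi}(t)$ of the closed-loop system. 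Iterating the map yields $z^{\pi}(nT)-z^{\pi}_{*}=\mu^{n}(z_0-z^{\pi}_{*})$, i.e.\ geometric decay at discrete sampling times.

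To upgrade discrete-time geometric decay to continuous-time exponential convergence I would, for arbitrary $t=nT+s$ with $s\in[0,T)$, apply Gr\"onwall's inequality to $e(t):=z^{\pi}(t)-\bar z^{\pi}(t)$, which satisfies the homogeneous equation $\dot e=-\delta(t)e$; this gives $|e(t)|\le |e(0)|\,e^{-\int_{0}^{t}\delta(u)\,du}$, and since $\int_{0}^{t}\delta(u)\,du\ge\bar\delta\,t-C$ with $\bar\delta=\delta_1\tau+\delta_2(1-\tau)>0$, exponential convergence at rate $\bar\delta$ follows. Non-negativity of $z^{\pi}(t)$ for all $t$ is preserved because $g^{\pi}\ge 0$ and $z_0\ge 0$ imply that the affine flow maps $\mathbb{R}_{\ge 0}$ into itself.

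The only genuinely delicate step, and the one I expect to be the main obstacle, is the \emph{well-posedness of $g^{\pi}$ as a single $T$-periodic function} in the mixed coalition structures $\pi_{41},\pi_{42}$: there the controls of the different singleton/coalition members are driven by \emph{different} adjoint variables $\lambda_1,\lambda_2$ satisfying distinct ODEs, so one must first verify that each $\lambda_j$ independently settles on its own hybrid limit cycle $q_j L(t)$ (already guaranteed by the derivation of \eqref{eq:$L_t$} and the sustainability condition of Definition \ref{def1}), and then confirm that substituting these into \eqref{eq:$upi41$}--\eqref{eq:$upi422$} yields functions that stay in the interior regime $\lambda_j(t)\in[-a_jb_j/\xi_j,0]$, so that $g^{\pi}$ is indeed continuous piecewise-affine and $T$-periodic; once this is secured, the Poincar\'e-map/contraction argument above, in the spirit of \citep{Gromov2023}, closes the proof uniformly in $\pi$.
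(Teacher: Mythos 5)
Your proposal is correct and follows essentially the same route as the paper: variation of constants for the scalar linear periodic dynamics, a fixed point of the affine period-$T$ map to obtain the hybrid limit cycle, and exponential decay of the error via the homogeneous equation $\dot e=-\delta(t)e$ with contraction factor $e^{-\int_0^T\delta(s)\,ds}<1$. The only difference is cosmetic --- you invoke Banach's contraction principle to get existence and uniqueness in one step, whereas the paper uses Brouwer for existence and the same contraction estimate separately for uniqueness --- and your explicit attention to the periodicity and interiority of the strategies in $\pi_{41},\pi_{42}$ is a point the paper treats only implicitly.
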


\begin{proof}
The dynamics of the state $z^{\pi}(t) \in z$ is defined in the following way:

\begin{equation}\label{eq:$xpi$}
\dot{z}^{\pi}(t)=\sum_{i=1}^{3}\xi_i v_{i}^{\pi}(t)-\delta(t) z^{\pi}(t), \ z^{\pi}(0)=z^{\pi}_0, 
\end{equation}
where  $v_{i}^{\pi}$ is uniquely determined in the previous section.

The general solution of \eqref{eq:$xpi$} can be derived as follows:

\begin{equation}\label{eq:$xsol$}
\phi(t,z^{\pi}_0)=z^{\pi}_0e^{-\int_{0}^{t}\delta(s) ds}+e^{-\int_{0}^{t}\delta(s) ds} \int_{0}^{t}\sum_{i=1}^{3}\xi_i v_{i}^{\pi}(\tau)e^{\int_{0}^{\tau}\delta(s) ds}d\tau.
\end{equation}

% Let $x^{\pi}(0)=\bar{x}^{\pi}_{hlc}$.
As $\phi(T,z^{\pi}_0)$ is the mapping from a compact convex set $z$ to $z$. By utilizing the Brouwer's fixed-point theorem, there exists a point $\bar{z}^{\pi}_{hlc}$,  where $\phi(T,\bar{z}^{\pi}_{hlc})=\bar{z}^{\pi}_{hlc}$.  Furthermore, the environmentally sustainable $v_{i}^{\pi}$ and self-purify rate $\delta(t)$ are periodic functions. Thus, according to the general solution \eqref{eq:$xsol$}, it follows that $\phi(t,\bar{z}^{\pi}_{hlc})=\phi(t+kT,\bar{z}^{\pi}_{hlc}),\ k\in \mathbb{N}_0$. 

Therefore, there exists a periodic solution  $\bar{z}^{\pi}(t)$ of (\ref{eq:$xpi$}) with period $T$  and initial condition $\bar{z}^{\pi}(0)=\bar{z}^{\pi}_{hlc}$. Moreover, by observing the steady state $\bar{z}^{*\pi}(t)=\frac{\sum_{i=1}^{3}\xi_i v_{i}^{\pi}(t)}{\delta(t)}$, as $v_{i}^{\pi}(t)$ is a hybrid limit cycle, in each subperiod $\bar{z}^{*\pi}(t)$ varies monotonically. Hence, the periodic state $\bar{z}^{\pi}(t)$, attracted by $\bar{z}^{*\pi}(t)$, varies monotonicially in each subperiod.  It implies that $\bar{z}^{\pi}(t)$ is not only a periodic solution but also a unique hybrid limit cycle starting from $\bar{z}^{\pi}_{hlc}$.

\textbf{Uniqueness:}

In addition, we proceed to prove the uniqueness of such a solution.

Noting that under different coalition structures, farsighted players' strategies are determined in the form of hybrid limit cycle, and the strategy of myopic player is either hybrid limit cycle or constant. Thus, their strategies are all periodic solutions, which implies that the second term in the right-hand side of \eqref{eq:$xsol$} does not depend on the initial value of the state. Thus, for any other initial value $z^{\pi\prime}_{hlc}$ such that $\phi(T,z^{\pi\prime}_{hlc})=z^{\pi\prime}_{hlc}$, we have

$0 \leq |z^{\pi\prime}_{hlc}-\bar{z}^{\pi}_{hlc}|= |\phi(T,z^{\pi\prime}_{hlc})- \phi(T,\bar{z}^{\pi}_{hlc})| = |z^{\pi\prime}_{hlc}-\bar{z}^{\pi}_{hlc}|e^{-\int_{0}^{T}\delta(s) ds} \leq |z^{\pi\prime}_{hlc}-\bar{z}^{\pi}_{hlc}|e^{-\delta_{min}T}$.

Here, $\delta_{min}=\min{(\delta_1,\delta_2)}$.
Then, it follows that

$0 \leq |z^{\pi\prime}_{hlc}-\bar{z}^{\pi}_{hlc}|(1-e^{-\delta_{min}T})\leq 0$.

Therefore, we have $z^{\pi\prime}_{hlc}=\bar{z}^{\pi}_{hlc}$.

\textbf{Convergence:}

Obviously, since $v_{i}^{\pi}(\tau)>0$ and $\delta(\tau)>0$ is independent of the initial value $z^{\pi}(0)$, for any arbitrary initial values $z^{\pi}(0)$,  we obtain the following exponential convergence property:

 $\lim\limits_{t \to \infty}| \phi(t,z^{\pi}(0)) - \phi(t,\bar{z}^{\pi}_{hlc}) | = \lim\limits_{t \to \infty} |z^{\pi}(0)- \bar{z}^{\pi}_{hlc}|e^{-\int_{0}^{t}\delta(s) ds}=0.$

This completes the proof.
\end{proof}

As the Theorem \ref{Theorem: Theorem1} mentioned above, we find that for any possible coalition structure, the dynamics of the state converges to the uniquely defined hybrid limit cycle $\bar{z}^{\pi}(t)$. Furthermore, among them, the cooperation structure $\pi_1$ brings the lowest convergent pollution stock as the players strategies are more conservative.

\section{Stability of cooperation}

\subsection{Sustainably-cooperative optimality principle}

 Cooperative behavior is optimal in terms of maximizing total utility. However, deviating from cooperative behavior may be beneficial to one or a group of participants. This raises the question of the stability of the cooperative agreement. In the context of stationary problems, the stability of cooperative partitions is the subject of works such as \citep{Sedakov13, Sun21, su2023}. According to the idea introduced in these works, a cooperative scenario is stable with respect to individual deviations (Nash stable coalition structure) if none of the players benefits from leaving the maximal coalition and becoming a singleton.
Thus, in the considered three-player game, the deviation of player 3 from the maximum coalition leads to a transition from coalition structure $\pi_1$ to coalition structure $\pi_3$, and if players 1 or 2 deviate --- to structures $\pi_{42}$ and $\pi_{41}$, 
 respectively. If this turns out to be to the advantage of the deviating player, then the coalition  structure $\pi_1$ will not be stable. Based on this idea, we propose a cooperative optimality principle for the problem under consideration, which includes imputations that satisfy the above property of stability against individual deviations. Since we are dealing with a dynamical model, we also investigate the non-emptiness of this optimality principle for each subgame along the cooperative trajectory. The time-consistency \citep{Pet2021, Pet2003, zacc2008} of the introduced optimality principle is also investigated.
 To keep the players in the grand coalition throughout the game, the method of designing the imputation distribution mechanism is applied. The condition for the stability of the grand coalition with two developed countries and one developing is examined in \citep{su2023}, however, the time-consistency issue was not considered.

%For this 3-player differential game, cooperation may be more preferable than non-cooperation, as cooperative behavior can be more beneficial for all players. However, each player may have an incentive to deviate and free ride on the efforts of the other players \citep{He2014, xiao2022}. When considering the stability of cooperation in differential games, the time inconsistency issue is often discussed.  

%Time inconsistency \citep{Pet2021, Pet2003, zacc2008}, a concept in the realm of decision-making, refers to the idea that the control(s) of a player/group of players might alter after some moment, which implies that the previous coalition structure $\pi$ has broken down.  To overcome the time inconsistency issue, the decision-maker could design the imputation distribution mechanism to stay the players in the grand coalition. 

%Assuming that a cooperative agreement for the 3-player game is reached at the beginning of the game. The cooperative coalition structure $\pi_1=\{1,2,3\}$ is dynamically stable means that any coalitions belong to the grand coalition obtain more payoff in the grand coalition $\pi_1$ than deviation from the grand coalition at the point $\epsilon\in[0,\infty]$. Thus, at every moment along $x^{\pi_1}(\epsilon)$, each player has no incentive to leave the grand coalition. 

%定义沿着x^{pi1}(\epsilon)的支付 
By $K_i^{\pi}(z^{\pi_1}(\epsilon),v)$ we denote the payoff functional of player $i$ under coalition structure $\pi$ in a subgame starting from the  moment $\epsilon$ with initial state $z^{\pi_1}(\epsilon)$
\begin{equation}\label{eq:$subgameK$}
K_i^{\pi}(z^{\pi_1}(\epsilon),v)=\int_{\epsilon}^{\infty} e^{-\rho (t-\epsilon)} [\sum_{i=1}^{3}(a_iv_i(b_i-v_i/2)) - q_iz]\, dt,\\
\end{equation}
where $q_1,q_2>0$ and $q_3=0$.

Denote  by  $v(S, z^{\pi_1}(t),t)$  the characteristic function of coalition $S\subseteq N$ in the subgame $\Gamma(z^{\pi_1}(t),t)$ starting from $t\in [0,\infty]$ with initial state $z^{\pi_1}(t)$, which maps coalition $S$ to a numerical value and can be interpreted as a measure of coalition's power (payoff, strength). We consider the classic characteristic functions as shown in \citep{von1947}.

Define by $R(z^{\pi_1}(t),t)$  the imputation set of the 3-player game in the subgame $\Gamma(z^{\pi_1}(t),t)$ starting from $t\in [0,\infty]$ with initial state $z^{\pi_1}(t)$:
\begin{equation}
    \begin{aligned}
    R(z^{\pi_1}(t),t)=\bigg \{\eta(z^{\pi_1}(t),t)=(\eta_1(z^{\pi_1}(t),t), \eta_2(z^{\pi_1}(t),t),\eta_3(z^{\pi_1}(t),t)):\\
     \sum_{i=1}^{3} \eta_i(z^{\pi_1}(t),t)= v(N,z^{\pi_1}(t),t), \ \eta_i(z^{\pi_1}(t),t) \geq   v(\{i\},z^{\pi_1}(t),t),\ i\in N \bigg \}.
   \end{aligned}
\end{equation}

% \begin{myDef}\label{def2}
% Define by $L(x(t),t)$ the set of imputation of the 3-player game in the subgame $\Gamma(x(t),t)$ starting from $t\in [0,\infty]$. Vector $\ l(x(t),t)=(l_1(x(t),t), l_2(x(t),t),l_3(x(t),t))\in L(x(t),t)$ is an imputation if it satisfies
% \begin{equation}
%     \begin{aligned}
%         l_i(x(t),t) \geq & \ v({i}), \ i=1,2,3, \\
%         \sum_{i=1}^{3} l_i(x(t),t)&=v(N).
%     \end{aligned}
% \end{equation}
% \end{myDef}

Imputation is a vector that satisfies  the first condition of group rationality  and the second condition of individual 
 rationality above. The concept of group rationality means that the collective benefits of cooperation within the grand coalition are fully distributed among players. Individual rationality refers to the idea that payoffs where a player obtains less than it could receive by acting alone, as measured by player's characteristic function value $v(\{i\},z^{\pi_1}(t),t)$, are unacceptable. 

 \begin{myDef}\label{def3}
In the 3-player subgame $\Gamma(z^{\pi_1}(t),t)$ with heterogeneous players the \textbf{sustainably-cooperative optimality principle}  $Z(z^{\pi_1}(t),t)$  is the set of vectors, such that
\begin{equation}\label{suq_coop}
    \begin{aligned}
    Z(z^{\pi_1}(t),t)=\bigg \{\zeta(z^{\pi_1}(t),t)=(\zeta_1(z^{\pi_1}(t),t), \zeta_2(z^{\pi_1}(t),t),\zeta_3(z^{\pi_1}(t),t)):\\
     \sum_{i=1}^{3} \zeta_i(z^{\pi_1}(t),t)=v(N,z^{\pi_1}(t),t), \ \zeta_1(z^{\pi_1}(t),t) \geq K_1^{\pi_{42}}(z^{\pi_1}(t),v^{\pi_{42}}(t)),
\\ \zeta_2(z^{\pi_1}(t),t) \geq K_2^{\pi_{41}}(z^{\pi_1}(t),v^{\pi_{41}}(t)), 
 \zeta_3(z^{\pi_1}(t),t) \geq K_3^{\pi_{3}}(z^{\pi_1}(t),v^{\pi_{3}}(t) \bigg \}. 
   \end{aligned}
\end{equation}
where $\zeta_i(z^{\pi_1}(t),t)$, for $i=1,2,3$, is the payment that allocated to the player $i$ from the grand coalition in the subgame $\Gamma(z^{\pi_1}(t),t)$ and $v(N,z^{\pi_1}(t),t)= \sum_{i=1}^{3}K_i^{\pi_1}(z^{\pi_1}(t),v^{\pi_1}(t))$.
\end{myDef}

\begin{lemma}\label{le01}
The sustainably-cooperative optimality principle $Z(z^{\pi_1}(t),t)$ is a subset of the imputation set $R(z^{\pi_1}(t),t)$ in any subgame $\Gamma(z^{\pi_1}(t),t)$, starting from point $t\in[0,\infty]$, along the cooperative trajectory $z^{\pi_1}(t)$, i.e., $Z(z^{\pi_1}(t),t) \subset R(z^{\pi_1}(t),t)$.
\end{lemma}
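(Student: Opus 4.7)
The plan is to verify that any $\zeta \in Z(z^{\pi_1}(t),t)$ satisfies both conditions defining the imputation set $R(z^{\pi_1}(t),t)$. Group rationality is immediate: both $Z$ and $R$ impose the identical equality $\sum_{i=1}^{3}\zeta_i = v(N,z^{\pi_1}(t),t)$, so this condition transfers verbatim from the definition of $Z$.

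For individual rationality, the task reduces to proving $\zeta_i\ge v(\{i\},z^{\pi_1}(t),t)$ for each $i=1,2,3$. The three lower bounds built into the definition of $Z$ give $\zeta_1\ge K_1^{\pi_{42}}$, $\zeta_2\ge K_2^{\pi_{41}}$, and $\zeta_3\ge K_3^{\pi_3}$, so it suffices to show
\[
K_1^{\pi_{42}}(z^{\pi_1}(t),v^{\pi_{42}}(t))\ge v(\{1\},z^{\pi_1}(t),t),
\]
and the two analogous inequalities. Under the classical von Neumann--Morgenstern characteristic function, $v(\{i\},\cdot)$ is the payoff player $i$ can guarantee when the complementary two players play adversarially in the sense of minimax. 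In the three deviation structures $\pi_{42}$, $\pi_{41}$, $\pi_3$, however, the remaining two players maximize their own joint payoff rather than minimize the singleton's payoff, so by a standard minimax inequality the singleton's realized payoff is at least this guarantee. Concretely, when player $1$ deviates and $\{2,3\}$ cooperates under $\pi_{42}$, Remark~\ref{rem01} shows that the myopic player's emission drops from the constant maximum $b_3$ to a sustainable limit-cycle form controlled by the coalition adjoint, which in turn yields a lower pollution trajectory than the adversarial emission path that would drive $z$ up to harm player $1$; since $K_1$ is strictly decreasing in $z$ through the $-q_1 z$ term, this produces $K_1^{\pi_{42}}\ge v(\{1\})$, and permuting the roles of the players gives the other two inequalities. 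Chaining $\zeta_i\ge K_i^{\pi}\ge v(\{i\},\cdot)$ then places $\zeta$ in $R$.

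The main obstacle will be the last comparison: one must formally verify that the self-maximizing strategy of the complementary coalition never harms the singleton more than the adversarial minimax strategy would. This relies on the monotonicity of the singleton's payoff in the pollution stock together with the property---established through Theorem~\ref{Theorem: Theorem1}---that both the coalition's and the singleton's environmentally sustainable emissions lie in the limit-cycle regime and are therefore bounded above by the maximum rates that a purely adversarial coalition would employ. Once this monotone ordering of trajectories is stated precisely, the desired inequalities follow, completing the inclusion $Z(z^{\pi_1}(t),t)\subset R(z^{\pi_1}(t),t)$.
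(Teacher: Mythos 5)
Your proposal is correct and follows essentially the same route as the paper: group rationality transfers verbatim, and individual rationality is reduced to $K_1^{\pi_{42}}\ge v(\{1\})$, $K_2^{\pi_{41}}\ge v(\{2\})$, $K_3^{\pi_3}\ge v(\{3\})$, which the paper likewise establishes by noting that the singleton plays the same strategy in the partial-cooperative structure as under the maximin computation, while the adversarial complement emits at the maximum rates $b_j$ and thus produces a higher pollution stock, so monotonicity of $K_i$ in $z$ yields the inequality (with equality for the myopic player~3). Your opening appeal to a ``standard minimax inequality'' is only valid because each player's open-loop best response is independent of the others' controls, so the singleton's realized strategy coincides with its maximin strategy---a fact both you and the paper use implicitly before falling back on the same trajectory-comparison argument.
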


\begin{proof}
    
    The definition of the characteristic function in \citep{von1947} is as follows

\begin{equation}
 v(S, z(t),t)=\begin{cases}0,&S=\{\emptyset\},\\ 
\mathop{\max}\limits_{v_i, \atop i\in S}  \mathop{\min}\limits_{v_j,\atop j\in N\setminus  S}   \sum\limits_{i\in S} K_i(z(t),v), &S\subset N,\\ 
\mathop{\max}\limits_{v} \sum\limits_{i=1}^{n} K_i(z(t),v),& S=N.
\end{cases}
\end{equation}

Then, $v(\{i\},z^{\pi_1}(t),t)=\mathop{\max}\limits_{v_i}  \mathop{\min}\limits_{v_j,\atop j\in N\setminus i} K_i(z^{\pi_1}(t),v(t))$ in which player $i$ maximizes his/her payoff and other players in $N \setminus i$ minimize  it. Note, that
 $v(\{1\},z^{\pi_1}(t),t)=K_1(z^{\pi_1}(t),v_1^{NE}(t), b_2, b_3 )$,  $v(\{2\},z^{\pi_1}(t),t)=$\linebreak $ K_2(z^{\pi_1}(t),b_1, v_2^{NE}(t), b_3 )$, $v(\{3\},z^{\pi_1}(t),t)=K_3(z^{\pi_1}(t),b_1, b_2, b_3)$. 
 For farsighted  player $i=1,2$, player $i$ adopts Nash equilibrium strategy $v^{NE}_i$, while other players ramp up their production rate to the maximum level in order to increase the stock of pollution as possible. This results in increasing of the tax ($q_i z$) that player $i$ has to bear. For the myopic player, whose payoff only depends on his/her own strategy,  he/she adopts Nash equilibrium strategy ($v^{NE}_3=b_3$). 

We find that the strategies of the singleton player in the partial-cooperative case and in the above situation are the same, however, the stock of pollution is greater in the above case. Thus, we have
$K_1^{\pi_{42}}(z^{\pi_1}(t),v^{\pi_{42}}(t)) > v(\{1\},z^{\pi_1}(t),t), K_2^{\pi_{41}}(z^{\pi_1}(t), v^{\pi_{41}}(t))>v(\{2\},z^{\pi_1}(t),t), K_3^{\pi_{3}}(z^{\pi_1}(t),v^{\pi_{3}}(t))=v(\{3\},z^{\pi_1}(t),t)$.
% \begin{align*}
%       &K_1^{\pi_{42}}(x^{\pi_1}(t),v^{\pi_{42}}(t)) > v^{\alpha}(\{1\},x^{\pi_1}(t),t),\ K_2^{\pi_{41}}(x^{\pi_1}(t), v^{\pi_{41}}(t))>v^{\alpha}(\{2\},x^{\pi_1}(t),t),\\ & K_3^{\pi_{3}}(x^{\pi_1}(t),v^{\pi_{3}}(t))=v^{\alpha}(\{3\},x^{\pi_1}(t),t)
% \end{align*}

Therefore, we conclude that $Z(z^{\pi_1}(t),t)$ is the subset of $R(z^{\pi_1}(t),t)$.

\end{proof}

% \begin{myDef}\label{def5}
% To sustain the cooperation along $x^{\pi_1}(\epsilon)$, the following condition must be satisfied:

% \begin{equation}\label{eq:$condirion$}
% \begin{aligned}
% K^{\pi_1}(x^{\pi_1}(\epsilon),v^{\pi_1})&=\zeta_1(x^{\pi_1}(\epsilon))+\zeta_2(x^{\pi_1}(\epsilon))+\zeta_3(x^{\pi_1}(\epsilon)),\\
% \zeta_1(x^{\pi_1}(\epsilon)) &\geq K_1^{\pi_{42}}(x^{\pi_1}(\epsilon),v^{\pi_{42}}),
% \\ \zeta_2(x^{\pi_1}(\epsilon)) &\geq K_2^{\pi_{41}}(x^{\pi_1}(\epsilon),v^{\pi_{41}}), 
% \\ \zeta_3(x^{\pi_1}(\epsilon)) &\geq K_3^{\pi_{3}}(x^{\pi_1}(\epsilon),v^{\pi_{3}}),
% \end{aligned}
% \end{equation}
% where $\zeta_i(x^{\pi_1}(\epsilon))$, for $i=1,2,3$, is the payment that allocated to the player $i$ from the grand coalition at time $\epsilon$, $K^{\pi_1}(x^{\pi_1}(\epsilon),v^{\pi_1})= \sum_{i=1}^{3}K_i^{\pi_1}(x^{\pi_1}(\epsilon),v^{\pi_1})$.
% \end{myDef}

The reason behind this sustainably-cooperative optimality principle is straightforward. For the subgame starting at moment $t\in[0,\infty]$,
along  the trajectory $z^{\pi_1}(t)$, we should always guarantee that the distributed payment to the player $i$ in the grand coalition is greater than it would be if player $i$ deviated from the grand coalition and played as a singleton.

For $j=1,2$, we define $f_j(t)=z^{\pi_{4j}}(t)-z^{\pi_1}(t), \ \overline{f_j}(t)=\bar{z}^{\pi_{4j}}(t)-\bar{z}^{\pi_1}(t)$, $M(t)=q_1f_2(t)+q_2f_1(t)$ and  $\overline{M}(t)=q_1\overline{f_2}(t)+q_2\overline{f_1}(t)$.
 The following expression can be obtained for $\overline{M}(t)$:
\begin{equation}\label{eq:$over_M$}
\overline{M}(t)=\begin{cases}A_1+B_1e^{-\delta_1 (t-kT)}+C_1e^{s_1 (t-kT)},& t\in[kT,(k+ \tau)T),\\A_2+B_2e^{-\delta_2 (t-kT)}+C_2e^{s_2 (t-kT)},& t\in[(k+\tau)T),(k+1)T)],\end{cases}
\end{equation}
where
\begin{equation*}
\begin{aligned}
& \{k_i\}_{1}^{3}=\{\frac{\xi_i^2}{2a_i}\}_{1}^{3},\ o_m=-2[(k_1+k_2)q_1q_2+(k_1+k_3)q_1^2+(k_1+k_3)q_2^2)],\\
& a_{m1}=o_m m_1,\ a_{m2}=o_m m_2,\ b_{m1}=\frac{o_m}{s_1},\ b_{m2}=\frac{o_m}{s_2}, \ \overline{M}_0=\overline{M}(0),\\ 
& A_1=-\frac{b_1}{\delta_1}, \ B_1=\frac{\delta_1(b_{m1}-a_{m1}+s_1\overline{M}_0)+b_{m1}s_1+\overline{M}_0\delta_1^2}{\delta_1(s_1+\delta_1)},\ C_1=\frac{a_{m1}}{s_1+\delta_1}, \\ 
& A_2=-\frac{b_2}{\delta_2},\ B_1=e^{\delta_2 T}\frac{\delta_2(b_{m2}-a_{m2}e^{s_2T}+s_2\overline{M}_0)+b_{m2}s_2+\overline{M}_0\delta_2^2}{\delta_2(s_2+\delta_2)},\ C_2=\frac{a_{m2}}{s_2+\delta_2}.
\end{aligned}
\end{equation*}

Then, we propose the  proposition to fulfil the above defined condition in Definition \ref{def3} for all $\epsilon \in [0,\infty]$.

\begin{proposition}\label{Proposition1}
% The condition for that sustainably-cooperative optimal principle is given by
 The sustainably-cooperative optimality principle  is not empty along the cooperative trajectory, i.e.  $Z(z^{\pi_1}(\epsilon),\epsilon) \neq \emptyset$, $\forall \epsilon\in [0,\infty]$,  if and only if the following condition is satisfied:
\begin{equation}\label{Prop1}
    Y \leq 
    \begin{large}
         \begin{cases} \frac{I_{\tau T}+ E}{G_0},& if \ s_1>s_2,\\ \frac{I_{0} + E}{G_0},& if \ s_1<s_2,\end{cases}
    \end{large}
\end{equation}
where 
% $k_i=\frac{\xi^2}{2a_i},i=1,2,3 ,\ I(z)=-\overline{M}(z)\int_{z}^{\infty} e^{-\rho (t-z)-\int_{z}^{t}\delta(s)ds}\, dt ,\ G=\int_{0}^{\infty} e^{-\rho t} L^2(t) dt. $
\begin{equation*}
\begin{aligned}
&Y=k_3q^2+k_1(q^2-q_1^2)+k_2(q^2-q_2^2), \\
&I_{\tau T}=\frac{(A_1+B_1e^{-\delta_1\tau T}+C_1e^{s_1\tau T})e^{(2s_1\tau +s_2(1-\tau))T}}{e^{(s_1\tau +s_2(1-\tau))T}-1}[\frac{1-e^{-s_1\tau T}}{s_1}\\
&-\frac{e^{(s_2-s_1)\tau T}(e^{-s_2T}-e^{-s_2\tau T})}{s_2}]-\frac{e^{s_1\tau T}-1}{s_1},\\
&  I_{0}= \frac{\overline{M}_0 e^{(s_1\tau +s_2(1-\tau))T}}{e^{(s_1\tau +s_2(1-\tau))T}-1}[\frac{1-e^{-s_1\tau T}}{s_1}
-\frac{e^{(s_2-s_1)\tau T}(e^{-s_2T}-e^{-s_2\tau T})}{s_2}],\\
&  E=\frac{e^{\rho T}}{e^{\rho T}-1}[\frac{C_1(-1+e^{\delta_1 \tau T})}{\delta_1}-\frac{C_2(e^{\delta_2T}+e^{\delta_2 \tau T})}{\delta_2}\\
&+\frac{A_1(1-e^{-\rho \tau T})+A_2(e^{-\rho \tau T}-e^{-\rho T})}{\rho}+\frac{B_1(1-e^{-s_1 \tau T})}{s_1}+\frac{B_2(e^{-s_2\tau T-e^{-s_2 T}})}{s_2}],\\
& G_0=\frac{e^{\rho T}}{e^{\rho T}-1}[\frac{-2m_1(-1+e^{\delta_1 \tau T})}{s_1 \delta_1} +\frac{m_1^2(-1+e^{(2s_1-\rho)\tau T})}{2s_1-\rho}\\
&+\frac{2m_2(e^{\delta_2 \tau T}-e^{\delta_2 T})}{s_2 \delta_2}+\frac{m_2^2 (e^{(2s_2-\rho)T}-e^{(2s_2-\rho)\tau T})}{2s_2-\rho}+\frac{e^{-\rho \tau T}(s_1^2-s_2^2)+s_2^2-s_1^2e^{-\rho T}}{s_1^2s_2^2\rho}].
\end{aligned}
\end{equation*}

\end{proposition}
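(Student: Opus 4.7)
The plan is to first recast non-emptiness of $Z(z^{\pi_1}(\epsilon),\epsilon)$ as a single scalar inequality. By summing the three lower bounds in Definition \ref{def3} and comparing with the group-rationality condition $\sum_i \zeta_i = v(N,z^{\pi_1}(\epsilon),\epsilon)$, the set is non-empty if and only if
\begin{equation*}
K_1^{\pi_{42}}(z^{\pi_1}(\epsilon),v^{\pi_{42}}) + K_2^{\pi_{41}}(z^{\pi_1}(\epsilon),v^{\pi_{41}}) + K_3^{\pi_3}(z^{\pi_1}(\epsilon),v^{\pi_3}) \le \sum_{i=1}^{3} K_i^{\pi_1}(z^{\pi_1}(\epsilon),v^{\pi_1}),
\end{equation*}
so the task is to reduce this functional inequality, required to hold for every $\epsilon\in[0,\infty]$, to the explicit algebraic bound \eqref{Prop1}.

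Next I would split each integrand into a strategy-profit part and a tax part. On the interior branch, every farsighted player's emission has the form $b_i+(\xi_i/a_i)\lambda$, so $a_i v_i(b_i-v_i/2)=\tfrac12 a_i b_i^2-k_i \lambda^2$ with $k_i=\xi_i^2/(2a_i)$. The adjoint coefficients attached to $L(t)$ are $q=q_1+q_2$ inside the $\{1,2,3\}$ and $\{1,2\}$ blocks, $q_1$ inside the $\{1,3\}$ block, $q_2$ inside the $\{2,3\}$ block, and $q_j$ for each singleton farsighted player, while the singleton myopic player plays $b_3$ and contributes only the constant $\tfrac12 a_3 b_3^2$. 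Collecting the quadratic coefficients across the deviation sum minus the cooperative sum yields exactly
\begin{equation*}
\bigl[(k_1+k_2+k_3)q^2 - k_1 q_1^2 - k_2 q_2^2\bigr]\,L(t)^2 = Y\,L(t)^2,
\end{equation*}
independent of the trajectory. The tax difference collapses to $-q_1 z^{\pi_{42}}-q_2 z^{\pi_{41}}+q z^{\pi_1}=-q_1 f_2(t)-q_2 f_1(t)=-M(t)$, so the condition rewrites as $\int_\epsilon^\infty e^{-\rho(t-\epsilon)}\bigl[Y\,L(t)^2-M(t)\bigr]\,dt \le 0$.

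Step three is to evaluate these integrals on the limit cycle. The gap $f_j$ satisfies a linear ODE with periodic forcing given by the discrepancy between the $\pi_{4j}$ and $\pi_1$ strategies; solving piecewise on $[kT,(k+\tau)T)$ and $[(k+\tau)T,(k+1)T)$ and imposing $\overline{M}(0)=\overline{M}(T)$ produces \eqref{eq:$over_M$}. Theorem \ref{Theorem: Theorem1} guarantees that $M-\overline{M}$ decays exponentially along the cooperative trajectory, so it suffices to verify the inequality with $\overline{M}$ substituted for $M$. Splitting each period of the integrand into its two constant-$\delta$ pieces and summing the resulting geometric series in $k$ with ratio $e^{-\rho T}$ brings in the prefactor $e^{\rho T}/(e^{\rho T}-1)$ and generates the compact constants $G_0$ (coefficient of $Y$, coming from $\int e^{-\rho s}L(s)^2\,ds$) and $E$ (the $\epsilon$-independent part of $\int e^{-\rho s}\overline{M}(s)\,ds$).

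Finally I have to optimise in $\epsilon\in[0,T)$. The residual $\epsilon$-dependent piece of $\int_\epsilon^\infty e^{-\rho(t-\epsilon)}\overline{M}(t)\,dt$ is piecewise exponential with rate $s_1$ on $[0,\tau T)$ and rate $s_2$ on $[\tau T,T)$, so its minimum over a period is attained at the switching instant $\epsilon=\tau T$ when $s_1>s_2$ (giving the constant $I_{\tau T}$, whose numerator bracket is precisely $\overline{M}(\tau T)$) and at $\epsilon=0$ when $s_1<s_2$ (giving $I_0$). Combining the three pieces, the necessary and sufficient condition $\min_{\epsilon}\Phi(\epsilon)+E\ge Y\,G_0$ becomes exactly \eqref{Prop1}. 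The main obstacle I anticipate is not conceptual but combinatorial book-keeping: the piecewise integration produces many exponential boundary terms that must be grouped with the correct cancellations to recover the compact forms $I_{\tau T}$, $I_0$, $E$ and $G_0$, and rigorously verifying that the extremum of the $\epsilon$-dependent part sits at the announced switching instant, rather than in the interior of a sub-period, requires a monotonicity check on each sub-interval of $[0,T)$.
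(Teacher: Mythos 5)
Your overall architecture matches the paper's: reduce non-emptiness of $Z(z^{\pi_1}(\epsilon),\epsilon)$ to the single inequality $\sum_i(\text{outside options})\le v(N,z^{\pi_1}(\epsilon),\epsilon)$, split the integrand into the production part quadratic in $L$ (giving $Y\,L^2(t)$ via $a_iv_i(b_i-v_i/2)=\tfrac12 a_ib_i^2-k_i\lambda^2$, with the constants cancelling) and the tax part (giving $M(t)$), then evaluate on the limit cycle and minimize over $\epsilon$. Those first two reductions are correct and identical to the paper's.

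The gap is in how you dispose of the transient. In the subgame $\Gamma(z^{\pi_1}(\epsilon),\epsilon)$ every deviation trajectory starts from the cooperative state, so $f_j(\epsilon)=0$ and $M(\epsilon)=0\neq\overline{M}(\epsilon)$; by the paper's identity (\ref{simplify01}), $M(t)=\overline{M}(t)-\overline{M}(\epsilon)e^{-\int_\epsilon^t\delta(s)ds}$. The exponential decay of $M-\overline{M}$ does \emph{not} make its discounted integral negligible: it equals $-\overline{M}(\epsilon)h(\epsilon)$ with $h(\epsilon)=\int_\epsilon^\infty e^{-\rho(t-\epsilon)-\int_\epsilon^t\delta(s)\,ds}\,dt$, a quantity of the same order as $E$, and it is exactly this term that produces $I_{\tau T}=-\overline{M}(\tau T)h(\tau T)$ and $I_0=-\overline{M}(0)h(0)$ in the statement. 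Your claim that ``it suffices to verify the inequality with $\overline{M}$ substituted for $M$'' therefore drops the $I$ terms, and your subsequent attribution of $I_{\tau T}$, $I_0$ to the $\epsilon$-dependent residue of $\int_\epsilon^\infty e^{-\rho(t-\epsilon)}\overline{M}(t)\,dt$ cannot be right: at $\epsilon=0$ that integral is exactly $E$, yet the bound for $s_1<s_2$ is $(I_0+E)/G_0$ with $I_0\neq 0$. Consequently the quantity to be extremized over $\epsilon$ is the product $\overline{M}(\epsilon)h(\epsilon)$, and locating its maximum at $\tau T$ (for $s_1>s_2$) or at $0$ (for $s_1<s_2$) requires the paper's co-monotonicity argument for the two factors $\overline{M}$ and $h$ on each subperiod, not a monotonicity check on the $\overline{M}$-integral alone.
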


\begin{proof}
 According to the Definition \ref{def3} optimality principle  $Z(z^{\pi_1}(\epsilon),\epsilon)$ is not empty along the cooperative trajectory if $\forall \epsilon\in [0,\infty]$ the following inequality holds:
\begin{equation}\label{inequality}K^{\pi_1}(z^{\pi_1}(\epsilon),v^{\pi_1})- K_1^{\pi_{42}}(z^{\pi_1}(\epsilon),v^{\pi_{42}}) -K_2^{\pi_{41}}(z^{\pi_1}(\epsilon),v^{\pi_{41}})- K_3^{\pi_{3}}(z^{\pi_1}(\epsilon),v^{\pi_{3}})\geq 0.\end{equation} 

Note that
\begin{equation*}
\begin{aligned}
&K^{\pi_1}(z^{\pi_1}(\epsilon),v^{\pi_1})- K_1^{\pi_{42}}(z^{\pi_1}(\epsilon),v^{\pi_{42}}) -K_2^{\pi_{41}}(z^{\pi_1}(\epsilon),v^{\pi_{41}})- K_3^{\pi_{3}}(z^{\pi_1}(\epsilon),v^{\pi_{3}}) \\
&=\int_{\epsilon}^{\infty} e^{-\rho (t-\epsilon)}L^2(t)[k_1(q^2_1-q^2)+k_2(q^2_2-q^2)-k_3q^2]\ dt + \int_{\epsilon}^{\infty} e^{-\rho (t-\epsilon)} M(t)\ dt.
\end{aligned}
\end{equation*}

Let $G(\epsilon)=\int_{\epsilon}^{\infty} e^{-\rho t} L^2(t) dt,\ Y=k_3q^2+k_1(q^2-q_1^2)+k_2(q^2-q_2^2)$, the inequality (\ref{inequality})  can be transformed into the following form:
\begin{equation}\label{eq:minepsi}
    Y \leq \mathop{\min}_{\epsilon}[\frac{\int_{\epsilon}^{\infty} e^{-\rho t} M(t)\ dt}{G(\epsilon)}].
\end{equation}

Recall that $L(t)$ is an uniquely defined hybrid limit cycle, for $s_1>s_2$ we have $m_1<0$ and $m_2>0$, which indicates that $L(t)$ in every first subperiod decreases, then increases in the second subperiod. And vice versa for $s_1<s_2$. 

As $L^2(t)$ and $\bar{M}(t)$ are periodic functions, we have $G(\epsilon)=e^{-\rho \epsilon}G(0)$  and  $ \int_{\epsilon}^{\infty} e^{-\rho t} \bar{M}(t)\ dt= e^{-\rho \epsilon}\int_{0}^{\infty} e^{-\rho t} \bar{M}(t)\ dt$. 

For $\pi=\pi_1,\pi_2,\pi_3,\pi_{41},\pi_{42}$, in the subgame $\Gamma(z^{\pi_1}(\epsilon),\epsilon),\ t\geq\epsilon$,  along the cooperative trajectory $z^{\pi_1}(\epsilon)$, we have  
\begin{equation}\label{eq:$subgamex$}
z^{\pi}(t)= \phi(t,z^{\pi_1}(\epsilon))=e^{- \int_{\epsilon}^{t}\delta(s)ds}[z^{\pi_1}(\epsilon) + \int_{\epsilon}^{t}\sum_{i=1}^{3}\xi_i v_{i}^{\pi}(\tau)e^{\int_{\epsilon}^{\tau}\delta(s) ds}d\tau].
\end{equation}

Then, with $z^{\pi_{4j}}(\epsilon)=z^{\pi_1}(\epsilon),\ j=1,2$,
\begin{equation}\label{simplify01}
\begin{aligned}
f_j(t)=z^{\pi_{4j}}(t) &-z^{\pi_{1}}(t)=z^{\pi_{4j}}(t) - \bar{z}^{\pi_{4j}}(t)- (z^{\pi_{1}}(t) - \bar{z}^{\pi_{1}}(t)) + \bar{z}^{\pi_{4j}}(t) -\bar{z}^{\pi_{1}}(t) \\
&=e^{- \int_{\epsilon}^{t}\delta(s)ds} (\bar{z}^{\pi_{1}}(\epsilon)-\bar{z}^{\pi_{4j}}(\epsilon))+ \bar{z}^{\pi_{4j}}(t) -\bar{z}^{\pi_{1}}(t)\\
&=-\bar{f}_j(\epsilon) e^{- \int_{\epsilon}^{t}\delta(s)ds} + \bar{f}_j(t)
\end{aligned}
\end{equation}

Hence, in the subgame $\Gamma(z^{\pi_1}(\epsilon),\epsilon),\ t\geq\epsilon$, through the derived general solution of $z^{\pi}$ in \eqref{eq:$subgamex$} and \eqref{simplify01}, we simplify

\begin{equation*}
\begin{aligned}
&\frac{\int_{\epsilon}^{\infty} e^{-\rho t} M(t)\ dt}{G(\epsilon)} =\frac{-\overline{M}(\epsilon) \int_{\epsilon}^{\infty}e^{-\rho t- \int_{\epsilon}^{t}\delta(s)ds} \ dt +\int_{\epsilon}^{\infty}e^{-\rho t}\overline{M}(t) \ dt}{G(\epsilon)} \\
&=\frac{-\overline{M}(\epsilon) \int_{\epsilon}^{\infty}e^{-\rho t- \int_{\epsilon}^{t}\delta(s)ds} \ dt + e^{-\rho \epsilon}\int_{0}^{\infty}e^{-\rho t}\overline{M}(t) \ dt}{e^{-\rho \epsilon}G(0)} \\
& = \frac{-\overline{M}(\epsilon) \int_{\epsilon}^{\infty}e^{-\rho (t-\epsilon)- \int_{\epsilon}^{t}\delta(s)ds} \ dt +\int_{0}^{\infty}e^{-\rho t}\overline{M}(t) \ dt}{G(0)}.
\end{aligned}
\end{equation*}

Let $h(\epsilon)=\int_{\epsilon}^{\infty}e^{-\rho (t-\epsilon)- \int_{\epsilon}^{t}\delta(s)ds} \ dt$. Thus,
\begin{equation*}
\begin{aligned}
\mathop{\arg\min}_{\epsilon} [\frac{-\overline{M}(\epsilon) h(\epsilon) +\int_{0}^{\infty}e^{-\rho t}\overline{M}(t) \ dt}{G(0)}]= \mathop{\arg\max}_{\epsilon} [\overline{M}(\epsilon) h(\epsilon)].
\end{aligned}
\end{equation*}

For $ o_m=-2[(k_1+k_2)q_1q_2+(k_1+k_3)q_1^2+(k_1+k_3)q_2^2)]$, the dynamics of $\overline{M}(\epsilon)$ is given by

\begin{equation*}
\begin{aligned}
\dot{\overline{M}}(\epsilon)&= q_1\dot{\bar{f}}_2(t)+q_2\dot{\bar{f}}_1(t)\\
&=o_mL(\epsilon)-\delta(\epsilon)\overline{M}(\epsilon).
\end{aligned}
\end{equation*}

The steady state of $\overline{M}(\epsilon)$ is derived as follows
\begin{large}
\begin{equation*}
\overline{M}^{*}(\epsilon)= \begin{cases} \frac{o_m L(\epsilon)}{\delta_1},& \epsilon \in [kT, (k+\tau)T],\\ \frac{o_m L(\epsilon)}{\delta_2},& \epsilon \in [(k+\tau)T, (k+1)T].\end{cases}
\end{equation*}
\end{large}

%Recall that 
%\begin{equation*}
%L(t)=\begin{cases}m_1e^{s_1(t-kT)}-\frac{1}{s_1},&t\in[kT,(k+\tau)T],\\ m_2e^{s_2(t-kT)}-\frac{1}{s_2},&t \in[(k+\tau)T,(k+1)T],\end{cases}
%\end{equation*}
% $s_1=\delta_1+\rho,\ s_2=\delta_2+\rho,\ m_1=\frac{(s_2-s_1)(1-e^{s_2 T(\tau-1)})}{s_1s_2(e^{s_1 \tau T}-e^{s_2 T(\tau-1)})}, \ m_2=e^{-s_2T}(m_1-\frac{1}{s_1}+\frac{1}{s_2})$.

For $s_1>s_2\ (s_1<s_2)$, $L(\epsilon)$ in each period $(T)$ first decreases (increases) and then increases (decreases). Moreover, for $\epsilon \in [0, \infty]$, $\overline{M}(\epsilon)$ is a hybrid limit cycle varying with the period $T$. Therefore, for $s_1>s_2\ (s_1<s_2)$, $\overline{M}(\epsilon)$ in each period $(T)$ first increases (decreases) and then decreases (increases). 

Next, we move to study $h(\epsilon)=\int_{\epsilon}^{\infty}e^{-\rho (t-\epsilon)- \int_{\epsilon}^{t}\delta(s)ds} \ dt$. Let $k_{\epsilon}=\lfloor \frac{\epsilon}{T}\rfloor T,\ o=\frac{1-e^{-s_1\tau T}}{s_1}-\frac{e^{(P_2-P_1)\tau T}(e^{-s_2T}-e^{-s_2\tau T})}{s_2}$. The explicit form of $h(\epsilon)$ is derived as the following expressions:

\begin{equation*}
h(\epsilon)= \begin{cases} h_1(\epsilon),& if \ 0 \leq \epsilon-k_{\epsilon} \leq \tau T,\\ h_2(\epsilon),& if \ \tau T \leq \epsilon-k_{\epsilon} \leq T,\end{cases}
\end{equation*}
where 
\begin{equation*}
\begin{aligned}
h_1(\epsilon)=&\frac{e^{(s_1\tau+s_2(1-\tau))T+s_1(\epsilon-k_{\epsilon})}}{e^{(s_1\tau+s_2(1-\tau))T}-1} o- \frac{e^{s_1(\epsilon-k_{\epsilon})}-1}{s_1} ,\\
h_2(\epsilon)=&\frac{e^{(s_1\tau+s_2(1-\tau))T+s_1\tau T+(\epsilon-k_{\epsilon}-\tau T)s_2}}{e^{(s_1\tau+s_2(1-\tau))T}-1} o\\ &-\frac{e^{s_1\tau T+(\epsilon-k_{\epsilon}-\tau T)s_2}-e^{(\epsilon-k_{\epsilon}-\tau T)s_2}}{s_1}+\frac{1-e^{(\epsilon-k_{\epsilon}-\tau T)s_2}}{s_2}.
\end{aligned}
\end{equation*}

The first order derivative of $h(\epsilon)$ is

\begin{equation*}
\frac{\mathrm{d} h(\epsilon) }{\mathrm{d} \epsilon} = 
\begin{large}
\begin{cases} e^{s_1(\epsilon-k_{\epsilon})} [\frac{(s_2-s_1)(1-e^{s_2(1-\tau)T})}{s_2(e^{(s_1\tau+s_2(1-\tau))T}-1)}],& if \ 0 \leq \epsilon-k_{\epsilon} \leq \tau T, \\
e^{s_2(\epsilon-k_{\epsilon}-\tau T)} [\frac{(s_2-s_1)(e^{s_1\tau T}-1)}{s_1(e^{(s_1\tau+s_2(1-\tau))T}-1)}],& if \ \tau T \leq \epsilon-k_{\epsilon} \leq T.
\end{cases}
\end{large}
\end{equation*}

It is very easy to check that when $s_1>s_2 \ (s_1<s_2)$, in the first subperiod of every period $T$, $\frac{\mathrm{d} h(\epsilon) }{\mathrm{d} \epsilon}>0 \ (\frac{\mathrm{d} h(\epsilon) }{\mathrm{d} \epsilon}<0)$, while in the second subperiod, $\frac{\mathrm{d} h(\epsilon) }{\mathrm{d} \epsilon}<0 \ (\frac{\mathrm{d} h(\epsilon) }{\mathrm{d} \epsilon}>0)$. As $h(\epsilon)=h(\epsilon+kT)$, meanwhile, $h(\epsilon)>0$ has the same trend as $\overline{M}(\epsilon)>0$. We conclude that function $\overline{M}(\epsilon)h(\epsilon)$ is a hybrid limit cycle. Therefore, 

\begin{equation}\label{eq:argmax}
\mathop{\arg\max}_{\epsilon} [\overline{M}(\epsilon) h(\epsilon)] = 
\begin{cases} \tau T+kT,& if \ s_1>s_2, \\
kT,& if \ s_1<s_2.
\end{cases}
\end{equation}

Then, let $k=0$ and plugging \eqref{eq:argmax} into \eqref{eq:minepsi}. To simplify the notation, define the following equations
\begin{equation}
\begin{aligned}
    I_{\tau T}&=-\overline{M}(\tau T)\int_{\tau T}^{\infty} e^{-\rho (t-\tau T)-\int_{\tau T}^{t}\delta(s)ds}\, dt, \ I_{0}=-\overline{M}(0)\int_{0}^{\infty} e^{-\rho t-\int_{0}^{t}\delta(s)ds}\, dt,\\ E&=\int_{0}^{\infty}e^{-\rho t}\overline{M}(t)\, dt,\ G_0=\int_{0}^{\infty} e^{-\rho t} L^2(t) dt.
\end{aligned}
\end{equation}

Calculating the above equations explicitly, we obtain the proposition.

\end{proof}

Consequently, if and only if there exists a set of parameters satisfying Proposition \ref{Proposition1}, the non-emptiness of the sustainably-cooperative optimality principle $Z(z^{\pi_1}(\epsilon),\epsilon)$ is guaranteed. The payoff of the grand coalition is not less than the sum of the payoffs that the player can obtain when the player deviates from the grand coalition for any subgame starting from $\epsilon \in [0,\infty]$. Through  reasonably distributing payoff of the grand coalition, the stability of cooperation can be achieved as the deviation from the grand coalition would lose player's profit.

\subsection{Time-consistency of the sustainably-cooperative optimality principle}

 Since the optimality principle $Z(z^{\pi_1}(\epsilon),\epsilon)$ is not empty at any instant of time if the above conditions are satisfied, we can further consider an important property of the cooperative solution as time-consistency. In this property,  the parties to the agreement following a cooperative trajectory  adhere to the same optimality principle at all times. As a result, there is no immediate incentive for them to abandon their earlier decision to cooperate.

The notion of time-consistency of the cooperative solution was first introduced by L.A. Petrosyan in 1977 \citep{Petrosyan77}. In \citep{PetrosyanDanilov}, a method was proposed to construct time-consistent cooperative solutions using a special payoff scheme called the imputation distribution procedure (IDP).  In \citep{Pet2003}, the problem of construction of time-consistent IDP was investigated for  cooperative game of pollution reduction with discounted payoffs. Here we detail this notion for the class of games under consideration.

\begin{myDef}\label{defIDP}
A vector function $w(t)=(w_1(t),\ldots, w_n(t))$, $t\in[t_0,T]$
is the imputation distribution procedure (IDP) for imputation
$\eta(z_0,0)\in R(z_0, 0)$, if
$$\eta_i(z_0,0)=\int\limits_{0}^{\infty}e^{-\rho s} w_i(s) \ ds, \ \ i\in N.$$
\end{myDef}

The value of $w_i(t) $  can be interpreted as an instantaneous payout
  to be allocated to player $i$ at instant of time $t$.

\begin{myDef}\label{defTC}
    An optimality principle $P(z_0,0) \subset R(z_0,0)$ is called  time-consistent if
    
    1. $P(z^{\pi_1}(t),t)\neq \emptyset,\ \forall t\in[0,\infty]$
    
    2. For any $\zeta\in P(z_0,0)$, there exists an IDP  $\ w(s)=(w_1(s),...,w_n(s)),\ s\in[0,\infty]$, such that $\zeta_i=\int_{0}^{\infty} e^{-\rho s} w_i(s) \ ds   ,i\in N$ and 
    $$P(z_0,0) \subset \int_{0}^{t} e^{-\rho s} w(s) \ ds  \oplus e^{-\rho t}P(z^{\pi_1}(t),t),\forall t\in [0,\infty].$$
The symbol $\oplus$  indicates $a \oplus B=\{a+b:b\in B\}$, for $a\in R^{n}$ and $B\subset R^{n}$.

\end{myDef}

For optimality principle $Z(z_0,0)$ condition 1 of Definition \ref{defTC} is fulfilled when inequality (\ref{Prop1}) holds. Condition 2  can be interpreted as follows. For each imputation from the optimality principle, there exists such an IDP that, at any instant of time, this imputation can be represented as the sum of the accumulated payoff up to time t (the amount given by the first right-hand term) and the imputation from the same optimality principle in the subgame starting at that time (the second right-hand term). In \citep{Pet2003}, an IDP  that ensures that this condition is fulfilled is called a  time-consistent IDP and a formula for obtaining such a procedure is derived. According to this formula, a time-consistent IDP for imputation $\zeta(z_0)\in Z(z_0,0)$ has the following form:

\begin{equation}\label{tcIDP}
w_i(\epsilon) =\rho \zeta_i(z^{\pi_1}(\epsilon)) -\frac{\mathrm{d}\zeta_i(z^{\pi_1}(\epsilon))}{\mathrm{d} \epsilon}
\end{equation}

To verify this, it is sufficient to check the fulfilment of conditions (\ref{pay_ds1}--\ref{pay_ds}).

\begin{equation}
\label{pay_ds1}
\zeta_i(z^{\pi_1}(0))=\int_{0}^{\infty} e^{-\rho s} w_i(s) \ ds, 
\end{equation}
\begin{equation}
\label{pay_ds}
\int_{0}^{\epsilon} e^{-\rho s} w_i(s) \ ds + e^{-\rho \epsilon}\zeta_i(z^{\pi_1}(\epsilon))=\zeta_i(z^{\pi_1}(0)), \quad \forall  \epsilon \in[0,\infty].
\end{equation}

%We argue that there are no internal contradictions between players to cooperate, and the resulting welfare of cooperation can be reciprocally transferred.

% that the obtained condition, in the previous subsection, to time-consistent sustainably-cooperative optimality principle is satisfied. Subsequently, 

%The imputation distribution procedure (IDP) can be designed to reflect the benefits of cooperation, ensuring that all players involved in the grand coalition are incentivized to continue their mutually beneficial efforts.

%if the above condition of  the optimality principle $Z(x^{\pi_1}(\epsilon),\epsilon)$ is non-empty at any point in time if the above conditions are satisfied

%\begin{equation*}
%w_i(\epsilon) =\rho \zeta_i(x^{\pi_1}(\epsilon)) -\frac{\mathrm{d}\zeta_i(x^{\pi_1}(\epsilon))}{\mathrm{d} \epsilon}
%\end{equation*}

%To attain this objective, one can utilize the egalitarian principle (see, e.g. \citep{Moulin1989, J2001, Gromova2021}), which provides a framework for the equitable distribution of the benefits accruing from cooperative endeavors, ensuring that all individuals involved in the grand coalition receive an equal share of the surplus generated by their joint efforts. 

In order to  get a more computationally convenient form of the time-consistent IDP, we represent the payment to player$i$, $i=1,2,3$, in the subgame $\Gamma(z^{\pi_1}(\epsilon),\epsilon)$ as follows 

\begin{equation}\label{payment}
\begin{aligned}
\zeta_1(z^{\pi_1}(\epsilon))&=K_1^{\pi_{42}}(z^{\pi_1}(\epsilon),v^{\pi_{42}})+\alpha_1 SC(\epsilon),\\
\zeta_2(z^{\pi_1}(\epsilon))&=K_2^{\pi_{41}}(z^{\pi_1}(\epsilon),v^{\pi_{41}}) +\alpha_2 SC(\epsilon),\\
\zeta_3(z^{\pi_1}(\epsilon))&=K_3^{\pi_{3}}(z^{\pi_1}(\epsilon),v^{\pi_{3}}) +\alpha_3 SC(\epsilon),\\
\end{aligned}
\end{equation}
where $SC(\epsilon)$ is the surplus of cooperation $SC(\epsilon)=K^{\pi_{1}}(z^{\pi_1}(\epsilon),v^{\pi_{1}})-K_1^{\pi_{42}}(z^{\pi_1}(\epsilon),v^{\pi_{42}})-K_2^{\pi_{41}}(z^{\pi_1}(\epsilon),v^{\pi_{41}})-K_3^{\pi_{3}}(z^{\pi_1}(\epsilon),v^{\pi_{3}})>0$, and  $\sum\limits^{3}_{i=1}\alpha_i=1,\ \alpha_i\in[0,1]$.
Using the form of imputation (\ref{payment})  and formula (\ref{tcIDP}) we obtain an expression for the time-consistent IDP  in the game under consideration:
\begin{equation}\label{IDP}
\begin{aligned}
 w_i(\epsilon)=&a_iv_i^{NE}(\epsilon)\Big(b_i-\frac{v_i^{NE}(\epsilon)}{2}\Big)-q_iz^{\pi_1}(\epsilon)+\\&\alpha_i\sum\limits_{i=1}^3a_i\Big[v_i^{co}(\epsilon)\Big(b_i-\frac{v_i^{co}(\epsilon)}{2}\Big)-v_i^{NE}(\epsilon)\Big(b_i-\frac{v_i^{NE}(\epsilon)}{2}\Big)\Big].\\
\end{aligned}
\end{equation}

Since the open-loop strategies and the corresponding state variables are periodic over time, it can be easily seen that the time-consistent IDP is also periodic (or seasonal). Thus, by making payments during the game according to the obtained IDP, the players ensure the time-consistency of the initially chosen imputation.  It can be observed that the players use the same imputation, i.e. the same vector $\alpha$ in (\ref{payment}), throughout the game. However, the optimality principle can include the set of imputations, and it is natural to think about a possible switch from the imputation chosen at the start of the game to another from the same optimality principle at any intermediate time. If the optimality principle is strongly time-consistent, then the sum of the accumulated payoffs up to the moment of switching and the new imputation in the subgame belongs to the initial optimality principle.  This property of the cooperative solution was introduced by L. Petrosyan \citep{Petrosyan93}.

\begin{myDef}\label{def4}
    An optimality principle $P(z_0,0) \subset L(z_0,0)$ is called strongly time-consistent if
    
    1. $P(z^{\pi_1}(t),t)\neq \emptyset,\ \forall t\in[0,\infty]$
    
    2. For any $\zeta\in P(z_0,0)$, there exists an IDP  $\ w(s)=(w_1(s),\ldots,w_n(s)),\ s\in[0,\infty]$, such that $\zeta_i=\int_{0}^{\infty} e^{-\rho s} w_i(s) \ ds   ,i\in N$ and 
    $$P(z_0,0) \supset \int_{0}^{t} e^{-\rho s} w(s) \ ds  \oplus e^{-\rho t}P(z^{\pi_1}(t),t),\forall t\in [0,\infty].$$

\end{myDef}

 Note that  the use of IDP (\ref{tcIDP}) does not guarantee a strong time-consistency of   the optimality principle  $Z(z^{\pi_1}(t),t)$. To demonstrate it, consider an imputation $\zeta^{'}(z^{\pi_1}(\epsilon))\in Z(z^{\pi_1}(\epsilon),\epsilon)$:
\begin{equation}\label{payment'}
\begin{aligned}
\zeta_1^{'}(z^{\pi_1}(\epsilon))&=K_1^{\pi_{42}}(z^{\pi_1}(\epsilon),v^{\pi_{42}})+\alpha_1 ^{'}SC(\epsilon),\\
\zeta_2^{'}(z^{\pi_1}(\epsilon))&=K_2^{\pi_{41}}(z^{\pi_1}(\epsilon),v^{\pi_{41}}) +\alpha_2^{'} SC(\epsilon),\\
\zeta_3^{'}(z^{\pi_1}(\epsilon))&=K_3^{\pi_{3}}(z^{\pi_1}(\epsilon),v^{\pi_{3}}) +\alpha_3^{'} SC(\epsilon).\\
\end{aligned}
\end{equation}

Suppose that up to the moment $\epsilon$ the cooperative payoff is distributed according to IDP (\ref{tcIDP}). The  player 1 will have to this moment 
$$\int_{0}^{\epsilon} e^{-\rho s} w_1(s) \ ds  =K_1^{\pi_{42}}(z_0,0)-e^{-\rho \epsilon}K_1^{\pi_{42}}(z^{\pi_1}(\epsilon),\epsilon)+\alpha_1(SC(0)-e^{-\rho \epsilon}SC(\epsilon)). $$

Then, if there is a switch at the moment $\epsilon$ from the imputation $\zeta$ to $\zeta^{'}$, the resulting payoff of player 1  has the following form
\begin{multline}\label{alpha'}
\int_{0}^{\epsilon} e^{-\rho s} w_1(s) \ ds  + e^{-\rho \epsilon}\zeta^{'}_1(z^{\pi_1}(\epsilon))=\\K_1^{\pi_{42}}(z_0,0)+\alpha_1(SC(0)-e^{-\rho \epsilon}SC(\epsilon))+\alpha_1^{'}SC(\epsilon).\end{multline}

Since $Y>0$ and $M(0)=0$, the integrand $e^{-\rho t}[-L^2(t)Y+ M(t)]$ under the integral of $SC(0)$ at the  moment $t=0$ is negative. That is, there exists a time period from $0$ to $t^{'}$ where this integrand under integral is negative. Hence, $SC(0)-e^{-\rho t^{'}}SC(t^{'})<0$ and we can't guarantee that
$\alpha_1(SC(0)-e^{-\rho \epsilon}SC(\epsilon))+\alpha_1^{'}SC(\epsilon)\geq 0$ for any $\alpha$, $\alpha^{'}$ and $\epsilon$. Thus, the combined imputation (\ref{alpha'}) in the whole game may not belong to  $Z(z^{\pi_1}(0),0)$. Therefore, we conclude that the sustainably-cooperative optimality principle  $Z(z^{\pi_1}(t),t)$ is not strong time-consistent.

%Consequently, by employing the imputation distribution procedure, each player is guaranteed a higher payoff under cooperation over time than that under noncooperative and partial-cooperation regimes.

\section{Numerical illustration}

\begin{figure}[bpht]
\centering
\includegraphics[scale=0.05]{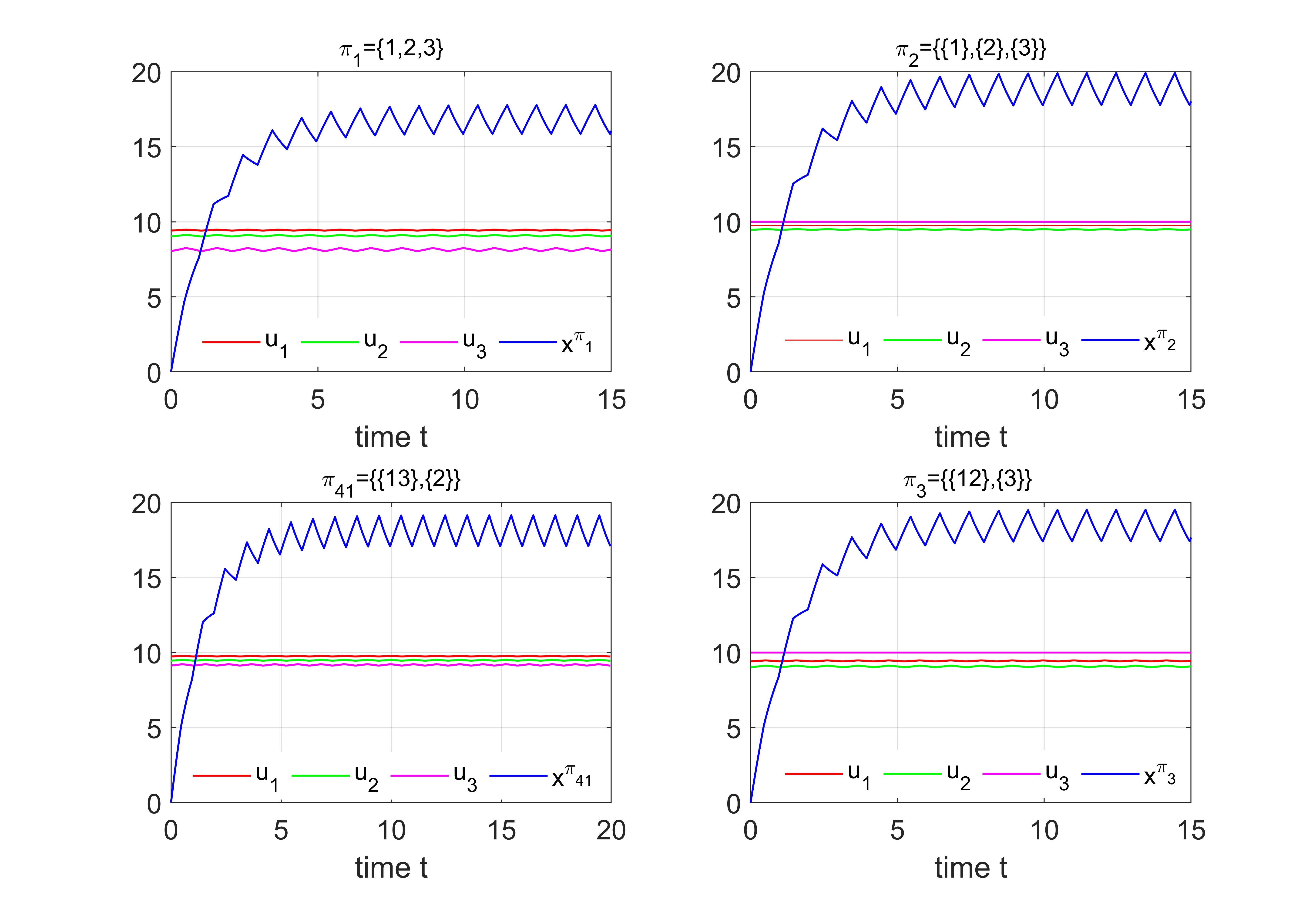}
\caption{Strategies and the corresponding state under different coalition structures.}
\label{fig:constate}
\end{figure}
Satisfying Proposition \ref{Proposition1}, the parameter setting considered in the following analysis is: $\delta_1 = 0.45;\ \delta_2 = 0.9;\ T =1;\ \tau = 0.5;\ \rho=0.3;\ q_1=4;\ q_2=5;\ \xi_1=0.3;\ \xi_2=0.4;\ \xi_3=0.6;\  a_1=5;\ a_2=4;\ a_3=3;\ b_1=b_2=b_3=10;\ \alpha_1=\alpha_2=\alpha_3=1/3;\ z_0=0$. Players' strategies and corresponding state under different coalition structures are shown in Figure \ref{fig:constate}.

Notice that under cooperation coalition, the stock of pollution sustained in the lowest level. It is interesting to note that, in other cases, players' emissions strategies are more aggressive. Especially for player $3$, whose emission rate takes highest admissible value in $\pi_{2}$ and $\pi_{3}$.  By reason of the players’ more reasonable and moderate emission strategies under long term consideration, the cooperative coalition dominates other coalition structures in both economic and environmental aspects.  As the coalition structure $\pi_{42}$ is similar to $\pi_{41}$, it is unnecessary to illustrate this situation.

In Figure \ref{fig:Payment}, where demonstrated the payment to each player in the subgame starting from $\epsilon \in [0,\infty]$, i.e., from top $\epsilon =0$ to the origin $\epsilon =\infty$. The proposed sustainably-cooperative optimality principle (red region) belongs to the imputation set (light yellow region). The blue solid line, belonging to the sustainably-cooperative optimality principle, represents for the payment to players, defined by \eqref{payment}, for the subgame $\Gamma(z^{\pi_1}(\epsilon),\epsilon)$.
%the IDP over time. 

\begin{figure}[bpht]
\centering
\includegraphics[scale=0.07]{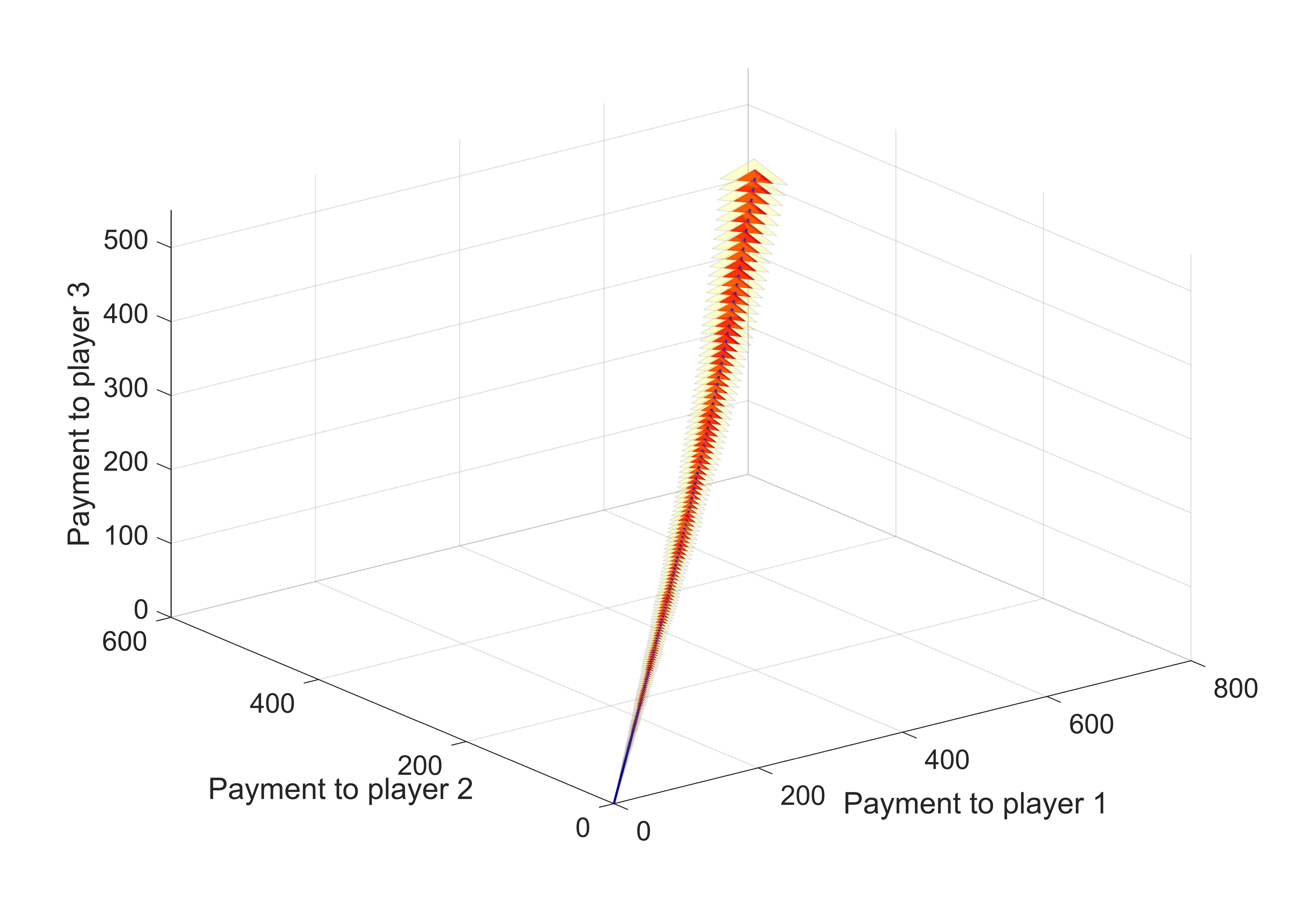}
\caption{Payment to each player.}
\label{fig:Payment}
\end{figure}

\section{Conclusion}

In this paper, we analyzed the 3-player differential game for a typical hybrid optimal pollution-control problem with two types of players: farsighted and myopic. The regular regime shifts are modeled by a piece-wise constant parameter in system dynamics. Considering all possible mechanisms of coalition formation, in each feasible coalition structure, players' strategies are obtained uniquely and the state variable converges to a unique hybrid limit cycle. 

Subsequently, an innovative sustainably-cooperative optimality principle belonging to the imputation is proposed. For this optimality principle, the necessary condition for the existence of time-consistency is formally derived. The imputation distribution procedure (IDP) is established for sustainability of the cooperation agreements. 

The forthcoming study will focus on generalizing the problem formulation to a multi-player game-theoretic framework, characterized by the presence of $n$ heterogeneous players with distinct objectives. 

\section* {CRediT authorship contribution statement}
{\bf Yilun Wu}: Conceptualization; Formal analysis; Investigation; Methodology;Resources; Software; Validation; Visualization; Writing - original draft; Writing - review \& editing.  {\bf Anna Tur}: Conceptualization; Formal analysis; Funding acquisition; Investigation; Methodology; Project administration; Resources; Supervision; Validation; Writing - original draft; Writing - review \& editing.   {\bf Peichen Ye}: Conceptualization; Investigation;Validation; Visualization; Writing - original draft; Writing - review \& editing.

\section* {Declaration of competing interest}
The authors declare that they have no known competing financial interests or personal relationships that could have appeared
to influence the work reported in this paper.

\section* {Data availability}

No data was used for the research described in the article.

\section* {Acknowledgement}
The work of the second author  was supported by the Russian Science Foundation under grant No. 24-21-00302.

\bibliographystyle{apalike}
%\bibliography{main}

\begin{thebibliography}{}

\bibitem[Altamirano-Cabrera et~al., 2013]{Alt2013}
Altamirano-Cabrera, J. C., Wangler, L., Weikard, H. P., \& Kroll, S.(2013).%
\newblock Political Economy of International Environmental Agreements.
\newblock {\em Encyclopedia of Energy, Natural Resource, and Environmental Economics. Vol. 3 Environment}, 300--305.  https://doi.org/10.1016/B978-0-12-375067-9.00021-8


\bibitem[Breton et~al., 2005]{Breton2005A}
Breton, M., Zaccour, G., \& Zahaf, M. (2005).%
\newblock A differential game of joint implementation of environmental projects.
\newblock {\em Automatica}, 41(10), 1737--1749. https://doi.org/10.1016/j.automatica.2005.05.004

\bibitem[Bonneuil and Boucekkine, 2016]{bonneuil2016optimal}
Bonneuil, N. and Boucekkine, R. (2016).%
\newblock Optimal transition to renewable energy with threshold of irreversible  pollution.
\newblock {\em European Journal of Operational Research}, 248(1):257--262. https://doi.org/10.1016/j.ejor.2015.05.060

\bibitem[De Zeeuw and Zemel, 2012]{De2012}
De Zeeuw, A., \& Zemel, A. (2012).
\newblock A consumption--investment problem with heterogeneous discounting.
\newblock {\em Mathematical Social Sciences}, 66(3), 221--232. https://doi.org/10.1016/j.mathsocsci.2013.05.001

\bibitem[De Zeeuw, 2018]{Dez2018}
De Zeeuw, A. (2018).
\newblock Dynamic games of international pollution control: A selective review.
\newblock {\em Handbook of dynamic game theory}, 703--728. https://doi.org/10.1007/978-3-319-44374-4\_16

\bibitem[De Zeeuw, 2024]{Dez2024}
De Zeeuw, A. (2024).
\newblock A Review of Tipping Points and Precaution using HJB equations.
\newblock {\em Dynamic Games and Applications}, 14(1), 253--270. https://doi.org/10.1007/s13235-023-00505-y

\bibitem[De-Paz et~al., 2013]{de2013}
De-Paz, A., Mar{\'\i}n-Solano, J., \& Navas, J. (2013).
\newblock A consumption--investment problem with heterogeneous discounting.
\newblock {\em Mathematical Social Sciences}, 66(3), 221--232. https://doi.org/10.1016/j.mathsocsci.2013.05.001




\bibitem[Fanokoa et~al., 2008]{Fanokoa2011}
Fanokoa P.~S., Telahigue, I. \& Zaccour, G. (2008).
\newblock Buying cooperation in an asymmetric environmental differential game.
\newblock {\em Journal of Economic Dynamics and Control}, 35(6), 935--946. https://doi.org/10.1016/j.jedc.2010.11.008

\bibitem[Finus, 2008]{fin2008}
Finus, M. (2008).
\newblock Game theoretic research on the design of international environmental agreements: insights, critical remarks, and future challenges.
\newblock {\em International Review of environmental and resource economics}, 2(1), 29--67. http://dx.doi.org/10.1561/101.00000011 



\bibitem[Goodland, 1995]{G1995}
Goodland, R. (1995).
\newblock The concept of environmental sustainability.
\newblock {\em Annual Review of Ecology and Systematics}, 26(1), 1--24.

\bibitem[Gromov and Gromova, 2014]{gromov2014differential}
Gromov, D. and Gromova, E. (2014).
\newblock Differential games with random duration: a hybrid systems formulation.
\newblock {\em Contributions to Game Theory and Management}, 7. 

\bibitem[Gromova  et~al., 2016]{Gromova2016}
Gromova, E. V., Tur, A. V., \& Balandina, L. I. (2016).
\newblock A game-theoretic model of pollution control with asymmetric time horizons.
\newblock {\em Contributions to Game Theory and Management}, 9(0), 170--179.


\bibitem[Gromov and Gromova, 2017]{Gromov2017}
Gromov, D. and Gromova, E. (2017).
\newblock On a class of hybrid differential games.
\newblock {\em Dynamic Games and Applications}, 7(2), 266--288. 
https://doi.org/10.1007/s13235-016-0185-3

\bibitem[Gromova et~al., 2021]{Gromova2021}
Gromova, E., Zaremba, A., \& Su, S. (2021).
\newblock Time-Consistency of an Imputation in a Cooperative Hybrid Differential Game.
\newblock {\em Mathematics},9(15), 1830. https://doi.org/10.3390/math9151830

\bibitem[Gromov et~al., 2022]{gromov2022periodic}
Gromov, D., Bondarev, A., \& Gromova, E. (2022).
\newblock On periodic solution to control problem with time-driven switching.
\newblock {\em Optimization Letters}, 16(7), 2019--2031. https://doi.org/10.1007/s11590-021-01749-6

\bibitem[Gromov et~al., 2023]{Gromov2023}
Gromov, D., Shigoka, T., \& Bondarev, A. (2023).
\newblock Optimality and sustainability of hybrid limit cycles in the pollution control problem with regime shifts.
\newblock {\em Environment, Development and Sustainability}, 1--18. https://doi.org/10.1007/s10668-023-03171-7




\bibitem[J{\o}rgensen et~al., 2010]{J2010}
J{\o}rgensen, S., Mart{\'\i}n-Herr{\'a}n, G., \& Zaccour, G. (2010)
\newblock Dynamic games in the economics and management of pollution.
\newblock {\em Environmental Modeling and Assessment}, 15, 433--467. https://doi.org/10.1007/s10666-010-9221-7




\bibitem[Mar{\'\i}n-Solano et~al., 2012]{Jes2012Heterogeneous}
 Mar{\'\i}n-Solano, J. and Patxot, C. (2012).%
\newblock Heterogeneous discounting in economic problems.
\newblock {\em Optimal Control Applications and Methods}, 33(1), 32--50. https://doi.org/10.1002/oca.975



\bibitem[Parilina et~al., 2024]{Par2024}
Parilina E., Yao F. , \& Zaccour G. (2024).
\newblock Pricing and investment in manufacturing and logistics when environmental reputation matters.
\newblock {\em Transportation Research Part E: Logistics and Transportation Review}, 184, 103468. https://doi.org/10.1016/j.tre.2024.103468


\bibitem[Petrosjan  and  Zaccour, 2003]{Pet2003}
 Petrosjan, L.  and  Zaccour, G. (2003).
\newblock Time-consistent Shapley value allocation of pollution cost reduction.
\newblock {\em Journal of Economic Dynamics and Control}, 27(3), 381--398. https://doi.org/10.1016/S0165-1889(01)00053-7




\bibitem[Petrosyan et~al., 2021]{Pet2021}
Petrosyan L., Yeung D. \& Pankratova Y. (2021).
\newblock Dynamic cooperative games on networks.
\newblock {\em International Conference on Mathematical Optimization Theory and Operations Research}, 403--416. https://doi.org/10.1007/978-3-030-86433-0\_28





\bibitem[Riedinger and Kratz, 2003]{riedinger2003optimal}
Riedinger, P., and Kratz, F. (2003).
\newblock An optimal control approach for hybrid systems.
\newblock {\em European Journal of Control}, 9(5):449--458. https://doi.org/10.3166/ejc.9.449-458



\bibitem[Shaikh and Caines, 2007]{shaikh2007hybrid}
Shaikh, M.~S. and Caines, P.~E. (2007).
\newblock On the hybrid optimal control problem: theory and algorithms.
\newblock {\em IEEE Transactions on Automatic Control}, 52(9):1587--1603. doi: 10.1109/TAC.2007.904451

\bibitem[Su and Parilina, 2023]{su2023}
Su, S. and Parilina, E.~M. (2023).
\newblock Can partial cooperation between developed and developing countries be stable?
\newblock {\em Operations Research Letters}, 51(3):370--377. https://doi.org/10.1016/j.orl.2023.05.003

\bibitem[Su and Parilina, 2024]{su2024}
Su, S. and Parilina, E.~M. (2024).
\newblock Designing stable coalition structures for international environmental agreements.
\newblock {\em Journal of Dynamics and Games}, 11(2):197--217. doi: 10.3934/jdg.2024004

\bibitem[Von Neumann and Morgenstern, 1947]{von1947}
Von Neumann, J. and Morgenstern, O. (1947).
\newblock {\em Theory of games and economic behavior, 2nd rev.}.



\bibitem[Wu et~al.,  2023]{Wu2023}
Wu, Y., Tur, A., \& Wang, H. (2023)
\newblock Sustainable Optimal Control for Switched Pollution-Control Problem with Random Duration.
\newblock {\em Entropy}, 25(10), 1426. https://doi.org/10.3390/e25101426




\bibitem[Zaccour,  2008]{zacc2008}
Zaccour, G. (2008)
\newblock Time consistency in cooperative differential games: A tutorial.
\newblock {\em INFOR: Information Systems and Operational Research},46(1), 81--92.


\bibitem[Petrosyan, 1977]{Petrosyan77} Petrosyan, L. (1977)
\newblock Time-consistency of solutions in multi-player differential games. 
\newblock {\em Astronomy},  4, 46--52. https://doi.org/10.3138/infor.46.1.81

\bibitem[Petrosyan and Danilov, 1977]{PetrosyanDanilov}
Petrosyan, L.A., Danilov, N.N. (1979) 
\newblock Stability of solutions in non-zero sum differential games with transferable payoffs. \newblock {\em Astronomy}, 1, 52--59. https://doi.org/10.1016/0021-8928(89)90130-5


\bibitem[Petrosyan, 1993]{Petrosyan93}
Petrosyan, L.A. (1993) 
\newblock Strong Time-Consistent Differential Optimality Principles.   \newblock {\em Vestn. Leningrad. Univ.}, 4, 35--40.


\bibitem[Sedakov et al., 2013]{Sedakov13}
Sedakov A., Parilina E., Volobuev Yu., Klimuk, D.  (2013) 
\newblock Existence of Stable Coalition Structures in Three-person Games.   \newblock {\em Contributions
to Game Theory and Management}, 6, 407--422.


\bibitem[Sun et al., 2021]{Sun21}
Sun F., Parilina,  E.M.,   Gao,  H.(2021) 
\newblock Individual stability of coalition structures in three-person games.   \newblock {\em Autom. Remote Control }, 82(6), 1083--1094. https://doi.org/10.1134/S0005117921060084


\end{thebibliography}

\end{document}